\providecommand{\U}[1]{\protect\rule{.1in}{.1in}}
\newtheorem{theorem}{Theorem}
\newtheorem{corollary}[theorem]{Corollary}
\newtheorem{definition}[theorem]{Definition}
\newtheorem{proposition}[theorem]{Proposition}
\newtheorem{remark}[theorem]{Remark}
\newenvironment{proof}[1][Proof]{\noindent\textbf{#1.} }{\ \rule{0.5em}{0.5em}}
\begin{document}

\title{Truncated Hermite polynomials}
\author{Diego Dominici \thanks{e-mail: diego.dominici@dk-compmath.jku.at}\\Johannes Kepler University Linz\\Doktoratskolleg \textquotedblleft Computational Mathematics\textquotedblright\\Altenberger Stra\ss e 69, 4040 Linz, Austria.\\Permanent address: Department of Mathematics\\State University of New York at New Paltz\\1 Hawk Dr., New Paltz, NY 12561-2443, USA
\and Francisco Marcell{\'a}n \thanks{e-mail: pacomarc@ing.uc3m.es}\\Departamento de Matem\'aticas \\Universidad Carlos III de Madrid \\Escuela Polit\'ecnica Superior \\Av. Universidad 30 \\28911 Legan\'es\\Spain}
\maketitle

\begin{abstract}
We define the family of truncated Hermite polynomials $P_{n}\left(
x;z\right)  $, orthogonal with respect to the linear functional
\[
L\left[  p\right]  =%
{\displaystyle\int\limits_{-z}^{z}}
p\left(  x\right)  e^{-x^{2}}dx,\quad p\in\mathbb{R}\left[  x\right]  ,\quad
z>0.
\]
The connection of $P_{n}\left(  x;z\right)  $ with Hermite and Rys polynomials
is stated. The semiclassical character of $P_{n}\left(  x;z\right)  $ as
polynomials of class $2$ is emphasized.

As a consequence, several properties of $P_{n}\left(  x;z\right)  $ concerning
the coefficients $\gamma_{n}\left(  z\right)  $ in the three-term recurrence
relation they satisfy as well as the moments and the Stieltjes function of $L$
are given. Ladder operators associated with such a linear functional and the
holonomic equation that the polynomials $P_{n}\left(  x;z\right)  $ satisfy
are deduced.

\end{abstract}

\section{Introduction}

The classical \emph{Hermite polynomials} (Laplace 1810, Chebyshev 1859,
Hermite 1864), are orthogonal with respect to the linear functional $L$
defined by \cite{MR0481884}%
\begin{equation}
L_{H}\left[  p\right]  =%
{\displaystyle\int\limits_{-\infty}^{\infty}}
p\left(  x\right)  e^{-x^{2}}dx,\quad p\in\mathbb{R}\left[  x\right]  .
\label{LH}%
\end{equation}
They have multiple applications in several areas of mathematics, including
Brownian motion, Gaussian quadrature, random matrices, and wavelet series.
They also appear in the framework of Sturm-Liouville equations associated with
the Schr\"{o}dinger equation of the harmonic oscillator in $\mathbb{R}$
\cite{MR2474331}$.$

Related to them are the \emph{Rys polynomials }(named after John Rys, graduate
student of Harry F. King), orthogonal with respect to the linear functional%
\begin{equation}
L_{R}\left[  p\right]  =%
{\displaystyle\int\limits_{I}}
p\left(  x\right)  e^{-ax^{2}}dx,\quad p\in\mathbb{R}\left[  x\right]  ,\quad
a>0. \label{LR}%
\end{equation}
They are usually denoted by $R_{n}\left(  x;a\right)  $ if $I=\left[
0,1\right]  $ and $J_{n}\left(  x;a\right)  $ if $I=\left[  -1,1\right]  .$
The polynomials $R_{n}\left(  x;a\right)  $ were introduced in
\cite{doi:10.1063/1.432807} to compute integrals related to electron repulsion
in molecular quantum mechanics \cite{https://doi.org/10.1002/jcc.540110809},
\cite{doi:10.1063/1.3600745}, \cite{doi:10.1063/1.461610},
\cite{doi:10.1063/1.446960},\cite{Tenno1993AnEA}. \cite{CARSKY1998266}.

The Rys polynomials have been studied by several authors, mostly from a
computational point of view, and mainly related to the implementation of
quadrature rules \cite{PMID:26613300}, \cite{doi:10.1063/1.4822929}%
,\cite{SCHWENKE2014762}. Their zeros and associated quadrature weights
(Christoffel numbers) have been extensively analyzed
\cite{doi:10.1063/1.3204437} \cite{doi:10.1021/acs.jpca.6b10004}.

A basic point is the study of the moment sequence for the linear functional
(\ref{LR}). Indeed, the moments of $L_{R}$ can be expressed in terms of the
incomplete gamma function, but for small values of $a$ they are readily
evaluated by a polynomial approximation to a non-alternating power series
expansion in $a$ that is valid over a specified range of $a.$ As a next step,
the practical evaluation of the zeros is done in terms of a low-order
approximation that is valid on finite intervals of $a,$ and by asymptotic
expansions for large $a.$ Another interesting point is the analytic
relationships between zeros and weights as well as their variation in terms of
the parameter $a$ \cite{MR474710}.

Quadrature formulas for more general linear functionals
\[%
{\displaystyle\int\limits_{-1}^{1}}
p\left(  x\right)  \left(  1-x^{2}\right)  ^{\lambda-\frac{1}{2}}e^{-zx^{2}%
}dx,\quad\lambda>-\frac{1}{2},\quad p\in\mathbb{R}\left[  x\right]  ,
\]
were considered in \cite{MR1158212}, \cite{MR661060}, \cite{MR3937492},
\cite{MR4412929}. By using a transformation of quadrature rules from the
interval $(-1,1)$ with $N$ nodes to the interval $(0,1)$ with $\frac{N+1}{2}$
nodes, the method of modified moments \cite{MR2061539} allows one to get the
coefficients in the corresponding three-term recurrence relation

In this paper, we will study the \emph{truncated Hermite polynomials}
$P_{n}\left(  x;z\right)  ,$ orthogonal with respect to the linear functional
\begin{equation}
L\left[  p\right]  =%
{\displaystyle\int\limits_{-z}^{z}}
p\left(  x\right)  e^{-x^{2}}dx,\quad p\in\mathbb{R}\left[  x\right]  ,\quad
z>0, \label{L}%
\end{equation}
and satisfying a three-term recurrence relation
\begin{equation}
xP_{n}\left(  x;z\right)  =P_{n+1}\left(  x;z\right)  +\gamma_{n}%
(z)P_{n-1}\left(  x;z\right)  ,\quad n\geq0, \label{3-term L}%
\end{equation}
with $P_{-1}=0.$

Since the Rys polynomials $J_{n}\left(  x;z^{2}\right)  $ satisfy%
\[
\frac{1}{z}%
{\displaystyle\int\limits_{-z}^{z}}
J_{n}\left(  \frac{x}{z};z^{2}\right)  J_{m}\left(  \frac{x}{z};z^{2}\right)
e^{-x^{2}}dx=\left\Vert J_{n}\right\Vert ^{2}\delta_{n,m},
\]
we can see that the polynomials $P_{n}\left(  x;z\right)  $ are related to the
Rys polynomials by%
\[
P_{n}\left(  x;z\right)  =z^{n}J_{n}\left(  \frac{x}{z};z^{2}\right)  ,\quad
h_{n}\left(  z\right)  =\left\Vert P_{n}\right\Vert ^{2}=z^{2n+1}\left\Vert
J_{n}\right\Vert ^{2}.
\]
Using the relation $\gamma_{n}(z)=\frac{h_{n}}{h_{n-1}},$ it follows that the
coefficients in the three-term recurrence relations are connected by%
\[
\gamma_{n}\left(  z\right)  =z^{2}\frac{\left\Vert J_{n}\right\Vert ^{2}%
}{\left\Vert J_{n-1}\right\Vert ^{2}}=z^{2}\gamma_{n}^{\left(  J\right)
}\left(  z\right)  .
\]

Nevertheless, our choice of (\ref{L}) is based on the fact that when
$z\rightarrow\infty$, we recover the standard Hermite linear functional
(\ref{LH}) in a direct way. Our approach has an analytic (rather than
numerical) flavor based on the $D-$semiclassical character of the linear
functional $L$. Thus, we can analyze the structure relation (ladder operator)
and the second order linear differential (holonomic) equation associated with
the corresponding sequence of orthogonal polynomials, and this ODE provides an
essential tool for an electrostatic interpretation of their zeros.\newline

The structure of the manuscript is as follows. In Section 2, we present a
basic background concerning linear functionals and orthogonal polynomials,
with a special emphasis on the symmetric and $D-$semiclassical cases. In
Section 3, we study some properties (especially the Pearson equation) that the
linear functional (\ref{L}) satisfies. The behavior of moments and the
associated Stieltjes function follows in a natural way. Section 4 is focused
on the nonlinear Laguerre-Freud equation that the coefficients of the
three-term recurrence relation $\gamma_{n}\left(  z\right)  $ satisfy. In
Section 5, the ladder operators associated with such a linear functional yield
a second order linear differential (holonomic) equation for the polynomials
$P_{n}\left(  x;z\right)  $. An electrostatic interpretation of their zeros in
terms of an external potential in a such a way they are in an equilibrium
state is given in Section 6. Finally, in Section 7, a Toda interpretation of
parameters of the three-term recurrence relation as well as of the orthogonal
polynomials in terms of the parameter $z$ is discussed. We also find a
nonlinear ODE (perhaps related to the Painlev\'{e} equations) satisfied by
$\gamma_{n}\left(  z\right)  $

\section{Basic background}

Let $\mathfrak{L}:\mathbb{R}\left[  x\right]  \rightarrow\mathbb{R}$ be a
linear functional and let $\mu_{n}$ denote the \emph{moments} of $L$ on the
monomial basis%
\begin{equation}
\mathfrak{L}\left[  x^{n}\right]  =\mu_{n}. \label{moment}%
\end{equation}
A sequence $\left\{  p_{n}\right\}  _{n\geq0},$ $\deg\left(  p_{n}\right)
=n,$ is called an \emph{orthogonal polynomial sequence} with respect to
$\mathfrak{L}$ if%
\begin{equation}
\mathfrak{L}\left[  p_{k}p_{n}\right]  =h_{n}\delta_{k,n},\quad k,n\in
\mathbb{N}_{0},\quad h_{n}\neq0, \label{ortho}%
\end{equation}
where $\delta_{k,n}$ denotes the Kronecker delta. If $h_{n}=1,$ then $\left\{
p_{n}\right\}  _{n\geq0}$ is said to be an{ }\emph{orthonormal polynomial
sequence}. Notice that it is unique with the convention that the leading
coefficient is a positive real number. In such a case, the linear functional
is said to be \emph{quasi-definite}, If $h_{n}>0$ for every $n\geq0,$ then the
linear functional is said to be \emph{positive definite}.

Let's denote by $\left\{  P_{n}\right\}  _{n\geq0}$ the sequence of
\textbf{monic} {polynomials}, {orthogonal }with respect to $\mathfrak{L}$.
From (\ref{ortho}), we see that%
\[
\mathfrak{L}\left[  xP_{k}P_{n}\right]  =0,\quad k\neq n,n\pm1,
\]
and therefore the polynomials $P_{n}\left(  x\right)  $ satisfy the
\emph{three-term recurrence relation}
\begin{equation}
xP_{n}(x)=P_{n+1}(x)+\beta_{n}P_{n}(x)+\gamma_{n}P_{n-1}(x),n\geq1,
\label{3-term}%
\end{equation}
with initial values $P_{0}(x)=1,$ \ $P_{1}(x)=x-\beta_{0}.$ The coefficients
$\beta_{n},\gamma_{n}$ are given by \cite{MR0481884}
\begin{equation}
\beta_{n}=\frac{\mathfrak{L}\left[  xP_{n}^{2}\right]  }{h_{n}},\quad
\gamma_{n}=\frac{\mathfrak{L}\left[  xP_{n}P_{n-1}\right]  }{h_{n-1}},\quad
n\geq1, \label{beta, gamma}%
\end{equation}
with initial values
\begin{equation}
\beta_{0}=\frac{\mu_{1}}{\mu_{0}}. \label{initial}%
\end{equation}
Note that using (\ref{ortho}) we have%
\[
h_{n}=\mathfrak{L}\left[  x^{n}P_{n}\right]  =\mathfrak{L}\left[
xP_{n}P_{n-1}\right]  =\gamma_{n}h_{n-1},n\geq1,
\]
and hence%
\begin{equation}
\gamma_{n}=\frac{h_{n}}{h_{n-1}},\quad n\geq1. \label{gamma-h}%
\end{equation}

\begin{definition}
A linear functional $\mathfrak{L}$ is called \emph{symmetric} if $\mu
_{2n-1}=0,$ for all $n\in\mathbb{N}.$
\end{definition}

Symmetric functionals can be characterized as follows.

\begin{theorem}
Let $\left\{  P_{n}\right\}  _{n\geq0}$ be the sequence of monic {polynomials}
{orthogonal }with respect to $\mathfrak{L}$. Then, the following statements
are equivalent:

\begin{enumerate}
\item $\mathfrak{L}$ is symmetric.

\item $\beta_{n}=0,\quad n\in\mathbb{N}.$

\item For all $n\in\mathbb{N}$
\begin{equation}
P_{n}\left(  -x\right)  =\left(  -1\right)  ^{n}P_{n}\left(  x\right)  .
\label{Pn odd}%
\end{equation}

\end{enumerate}
\end{theorem}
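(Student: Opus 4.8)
The plan is to prove the equivalence by establishing a cycle of implications: (1)$\Rightarrow$(2)$\Rightarrow$(3)$\Rightarrow$(1). The key tool throughout is the observation that a symmetric functional interacts predictably with the parity of polynomials, so I would first record the basic fact that if $q(x)$ is an odd polynomial then $\mathfrak{L}[q]=0$, which follows immediately from the definition $\mu_{2n-1}=0$ by expanding $q$ in the monomial basis.

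For (1)$\Rightarrow$(2), I would argue by strong induction that each $P_n$ has the same parity as $n$, meaning $P_n(-x)=(-1)^nP_n(x)$, and simultaneously that $\beta_n=0$. The base cases $P_0=1$ (even) and $P_1=x-\beta_0=x-\mu_1/\mu_0=x$ (odd, since $\mu_1=0$) are direct. For the inductive step, assuming $P_n$ and $P_{n-1}$ have parities $n$ and $n-1$, the formula $\beta_n=\mathfrak{L}[xP_n^2]/h_n$ has an odd integrand $xP_n^2$, so $\beta_n=0$ by the parity fact; then the recurrence $P_{n+1}=(x-\beta_n)P_n-\gamma_nP_{n-1}=xP_n-\gamma_nP_{n-1}$ shows $P_{n+1}$ has parity $n+1$, closing the induction. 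This single induction actually delivers both (2) and (3) at once, which is convenient.

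For (2)$\Rightarrow$(3), since I would obtain the parity statement as a byproduct of the induction above, I can instead give a cleaner standalone argument: assuming all $\beta_n=0$, the recurrence reduces to $xP_n=P_{n+1}+\gamma_nP_{n-1}$, and a short induction on $n$ using $P_0(-x)=P_0(x)$ and $P_1(-x)=-P_1(x)$ shows that $\widetilde{P}_n(x):=(-1)^nP_n(-x)$ satisfies the same recurrence with the same initial data as $P_n$, hence $\widetilde{P}_n=P_n$ by uniqueness of the monic orthogonal sequence. For (3)$\Rightarrow$(1), I would compute $\mu_{2n-1}=\mathfrak{L}[x^{2n-1}]$ by writing $x^{2n-1}$ as a linear combination of $P_0,\dots,P_{2n-1}$; the parity hypothesis (\ref{Pn odd}) forces this expansion to involve only odd-indexed $P_j$, and since $\mathfrak{L}[P_j]=0$ for all $j\geq1$ by orthogonality to $P_0=1$, we get $\mu_{2n-1}=0$.

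The main obstacle is not any single deep step but rather being careful about what "uniqueness" and orthogonality are being invoked at each stage: the implication (2)$\Rightarrow$(3) leans on uniqueness of the monic orthogonal polynomial sequence, which presumes quasi-definiteness, and the implication (3)$\Rightarrow$(1) leans on every monomial being expressible in the orthogonal basis with $\mathfrak{L}[P_j]=0$ for $j\geq1$. I would make sure the parity bookkeeping is airtight, since the entire argument rests on the elementary but ubiquitous fact that $\mathfrak{L}$ annihilates odd polynomials exactly when it is symmetric.
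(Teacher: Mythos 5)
Your proof is correct. Note, however, that the paper does not actually prove this theorem: it simply cites Chihara's book (Theorem 4.3 of that reference), so there is no in-paper argument to compare against. What you have written is essentially the standard textbook proof, and every step checks out: the key observation that a symmetric $\mathfrak{L}$ annihilates odd polynomials, the joint induction giving parity of $P_n$ and vanishing of $\beta_n=\mathfrak{L}[xP_n^2]/h_n$ simultaneously (since $xP_n^2$ is always odd), the verification that $(-1)^nP_n(-x)$ satisfies the same recurrence, and the expansion of $x^{2n-1}$ in the $P_j$ basis combined with $\mathfrak{L}[P_j]=\mathfrak{L}[P_0P_j]=0$ for $j\geq1$. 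Two small remarks: in the implication $(2)\Rightarrow(3)$ you invoke uniqueness of the monic orthogonal sequence, but all you really need is that the three-term recurrence with fixed initial data $P_0=1$, $P_1=x$ determines the sequence uniquely, so orthogonality plays no role there; and you should make explicit that condition (2) is read as including $\beta_0=0$ (as it is in Chihara), since your arguments for both $(1)\Rightarrow(2)$ and $(2)\Rightarrow(3)$ use $P_1=x$.
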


\begin{proof}
See \cite[Theorem 4.3]{MR0481884}.
\end{proof}

Note that if $\mathfrak{L}$ is symmetric there exist two sequences of
polynomials $\{P_{n}^{e}\left(  x\right)  \}_{n\geq0}$ and $\{P_{n}^{o}\left(
x\right)  \}_{n\geq0}$ such that
\[
P_{2n}(x)=P_{n}^{e}(x^{2}),\quad P_{2n+1}(x)=xP_{n}^{o}(x^{2}).
\]
The polynomials $P_{n}^{e}\left(  x\right)  ,P_{n}^{o}\left(  x\right)  $ are
orthogonal with respect to the linear functionals $\mathfrak{U},$
$\mathfrak{V}$ satisfying $\mathfrak{U}[x^{n}]=\mathfrak{L}[x^{2n}]$ and
$\mathfrak{V}[x^{n}]=\mathfrak{U}[x^{n+1}],$ respectively (see \cite[Theorem
8.1]{MR0481884}) .

It is clear from (\ref{L}) that $\mathfrak{L}$ is symmetric and therefore the
polynomials $P_{n}\left(  x;z\right)  $ satisfy the recurrence relation
(\ref{3-term L}) with initial conditions $P_{0}(x;z)=1,P_{1}(x;z)=x.$ If we
write
\begin{equation}
P_{n}(x;z)=x^{n}-c_{n}\left(  z\right)  x^{n-2}+d_{n}\left(  z\right)
x^{n-4}+O\left(  x^{n-6}\right)  , \label{Pn coeff}%
\end{equation}
with $c_{0}=c_{1}=0,$ $d_{0}=d_{1}=d_{2}=d_{3}=0,$ then (\ref{3-term L}) gives%
\begin{gather*}
x^{n+1}-c_{n}x^{n-1}+d_{n}x^{n-3}+O\left(  x^{n-5}\right) \\
=x^{n+1}-c_{n+1}x^{n-1}+d_{n+1}x^{n-3}+O\left(  x^{n-5}\right) \\
+\gamma_{n}\left(  x^{n-1}-c_{n-1}x^{n-3}+d_{n+1}x^{n-5}+O\left(
x^{n-7}\right)  \right)  ,
\end{gather*}
and comparing powers of $x$ we get%
\begin{equation}
c_{n}=c_{n+1}-\gamma_{n},\quad d_{n}=d_{n+1}-\gamma_{n}c_{n-1}. \label{cn dn}%
\end{equation}
We conclude that
\[
c_{n}\left(  z\right)  =%
{\displaystyle\sum\limits_{k=1}^{n-1}}
\gamma_{k}\left(  z\right)  ,\quad n\geq2,
\]
and%
\[
d_{n}\left(  z\right)  =%
{\displaystyle\sum\limits_{k=3}^{n-1}}
\gamma_{k}\left(  z\right)  c_{k-1}\left(  z\right)  ,\quad n\geq4.
\]
Reversing (\ref{Pn coeff}), we obtain%
\begin{equation}%
\begin{tabular}
[c]{l}%
$x^{n}=P_{n}(x;z)+c_{n}\left(  z\right)  P_{n-2}(x;z)$\\
$-\left[  d_{n}\left(  z\right)  -c_{n}\left(  z\right)  c_{n-2}\left(
z\right)  \right]  P_{n-4}(x;z)+O\left(  x^{n-6}\right)  .$%
\end{tabular}
\label{x^n}%
\end{equation}

Taking the derivative with respect to $x$ in (\ref{Pn coeff}), we have%
\[
\partial_{x}P_{n}=nx^{n-1}-\left(  n-2\right)  c_{n}x^{n-3}+\left(
n-4\right)  d_{n}x^{n-5}+O\left(  x^{n-7}\right)  ,
\]
and using (\ref{x^n}) we see that%
\[
\partial_{x}P_{n}=n\left(  P_{n-1}+c_{n-1}P_{n-3}\right)  -\left(  n-2\right)
c_{n}P_{n-3}+O\left(  x^{n-5}\right)  .
\]
Since $c_{n-1}-c_{n}=-\gamma_{n-1},$ we conclude that%
\begin{equation}
\partial_{x}P_{n}(x;z)=nP_{n-1}(x;z)+\left[  2c_{n}\left(  z\right)
-n\gamma_{n-1}\right]  P_{n-3}(x;z)+O\left(  x^{n-5}\right)  . \label{derP}%
\end{equation}

An interesting family of linear functionals is the so called $D-$%
\emph{semiclassical} (with respect to the derivative operator). In such a
case, $\mathfrak{L}$ satisfies a first order linear differential equation
(\emph{Pearson equation})%
\[
\partial_{x}^{\ast}\left(  \phi\mathfrak{L}\right)  +\psi\mathfrak{L}=0,
\]
where $\phi\left(  x\right)  $ is a monic polynomial, $\psi\left(  x\right)  $
is a polynomial of degree at least $1$, and the \emph{adjoints} of the
derivative and multiplication operators are defined by \cite{MR4331433}%
\begin{equation}
\left(  \partial_{x}^{\ast}\mathfrak{L}\right)  \left[  p\right]
=-\mathfrak{L}\left[  \partial_{x}p\right]  ,\quad\left(  x\mathfrak{L}%
\right)  \left[  p\right]  =\mathfrak{L}\left[  xp\right]  . \label{adjoints}%
\end{equation}
Notice that a semiclassical linear functional satisfies many Pearson equations
taking into account the choices of the polynomials $\phi,\psi.$ The
\textbf{minimal degree} choice of $\phi,\psi$ yields the definition of the
\emph{class} of a semiclassical linear functional $\mathfrak{L}$ as%
\[
s=\max\left\{  \deg(\phi)-2,\deg(\psi)-1\right\}  .
\]
Notice that $D-$\emph{classical} linear functionals (Hermite, Laguerre,
Jacobi, Bessel) are semiclassical of class $s=0$. The description of
$D-$semiclassical linear functional of class $s=1$ has been done in
\cite{MR1186737}. Characterizations of the $D-$semiclassical orthogonal
polynomial sequences were pioneered by Pascal Maroni \cite{MR932783},
\cite{MR1270222}, \cite{MR1246855}, \cite{MR1711161}.

\section{Truncated Hermite linear functional}

In this section we study the truncated Hermite linear functional $L$ defined
by (\ref{L}), which is a $D-$semiclassical functional of class $s=2.$ In order
to prove it, we will first find the corresponding Pearson equation. Next, we
will deduce a second order linear recurrence equation satisfied by its
moments. By using the $z$-transform of the sequence of moments (Stieltjes
function), we will get a first order linear differential equation that the
Stieltjes function satisfies.

\subsection{Pearson equation}

\begin{proposition}
Let $p\in\mathbb{R}\left[  x\right]  $ and $\phi\left(  x;z\right)  ,$
$\psi\left(  x;z\right)  $ be defined by
\begin{equation}
\phi\left(  x;z\right)  =x^{2}-z^{2},\quad\psi\left(  x;z\right)
=2x\phi\left(  x;z\right)  . \label{phi}%
\end{equation}
The functional $L$ defined by (\ref{L}) satisfies the Pearson equation%
\begin{equation}
L\left[  \partial_{x}\left(  \phi p\right)  \right]  =L\left[  \psi p\right]
, \label{Pearson1}%
\end{equation}
or equivalently%
\begin{equation}
L\left[  \phi\partial_{x}p\right]  =L\left[  2x\left(  \phi-1\right)
p\right]  . \label{Pearson2}%
\end{equation}

\end{proposition}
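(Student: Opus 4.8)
The plan is to prove (\ref{Pearson1}) directly from the integral definition (\ref{L}) by a single integration by parts, and then to obtain (\ref{Pearson2}) as a purely algebraic reformulation of (\ref{Pearson1}).

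First I would write out the left-hand side as
\[
L\left[  \partial_{x}\left(  \phi p\right)  \right]  =\int_{-z}^{z}\partial
_{x}\left(  \phi p\right)  e^{-x^{2}}\,dx,
\]
and integrate by parts, keeping $\partial_{x}\left(  \phi p\right)  $ as the factor to be integrated (giving $\phi p$) and $e^{-x^{2}}$ as the factor to be differentiated (giving $-2xe^{-x^{2}}$). This produces the boundary contribution $\left[  \phi\left(  x;z\right)  p\left(  x\right)  e^{-x^{2}}\right]  _{-z}^{z}$ together with the integral $\int_{-z}^{z}\phi p\cdot 2xe^{-x^{2}}\,dx$. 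The decisive point, and the whole reason for the choice of $\phi$ in (\ref{phi}), is that $\phi\left(  \pm z;z\right)  =z^{2}-z^{2}=0$, so the boundary term vanishes identically. What remains is exactly $\int_{-z}^{z}\left(  2x\phi\right)  p\,e^{-x^{2}}\,dx=L\left[  \psi p\right]  $, since $\psi=2x\phi$, which is (\ref{Pearson1}).

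To pass to (\ref{Pearson2}), I would expand $\partial_{x}\left(  \phi p\right)  =\phi^{\prime}p+\phi\,\partial_{x}p$ with $\phi^{\prime}=2x$, substitute into (\ref{Pearson1}) using linearity of $L$, and solve for $L\left[  \phi\,\partial_{x}p\right]  $, which gives
\[
L\left[  \phi\,\partial_{x}p\right]  =L\left[  \psi p\right]  -L\left[
2xp\right]  =L\left[  \left(  2x\phi-2x\right)  p\right]  =L\left[  2x\left(
\phi-1\right)  p\right]  .
\]
It is worth noting that (\ref{Pearson2}) is the form displaying the semiclassical structure directly: in the adjoint notation (\ref{adjoints}) it reads $\partial_{x}^{\ast}\left(  \phi L\right)  +\widetilde{\psi}L=0$ with $\widetilde{\psi}=2x\left(  \phi-1\right)  =2x\left(  x^{2}-z^{2}-1\right)  $ of degree $3$, so that $\max\left\{  \deg\phi-2,\deg\widetilde{\psi}-1\right\}  =2$, anticipating the announced class $s=2$.

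I do not expect any genuine analytic obstacle: $p$ is a polynomial, the weight $e^{-x^{2}}$ is smooth, and the interval of integration is compact, so the integration by parts is unconditionally justified. The only step that must be checked with care is the vanishing of the endpoint term, and this is forced by the design of $\phi$ rather than by any estimate. In short, the content of the proposition is conceptual: it records that truncating the Hermite weight to $\left[  -z,z\right]  $ replaces the classical Pearson pair by a semiclassical one whose polynomial $\phi$ is dictated precisely by the location of the endpoints.
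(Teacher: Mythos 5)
Your proof is correct and follows essentially the same route as the paper: a single integration by parts in which the boundary term $\left[\phi\, p\, e^{-x^{2}}\right]_{-z}^{z}$ vanishes because $\phi(\pm z;z)=0$, followed by the product rule to pass from (\ref{Pearson1}) to (\ref{Pearson2}). Your added remarks on the vanishing of the endpoint term and on the class computation are accurate but not needed beyond what the paper's argument already contains.
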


\begin{proof}
Let $p\in\mathbb{R}\left[  x\right]  .$ We have%
\begin{gather*}
L\left[  \partial_{x}\left(  \phi p\right)  \right]  =%
{\displaystyle\int\limits_{-z}^{z}}
\partial_{x}\left(  \phi p\right)  e^{-x^{2}}dx\\
=\left[  \phi\left(  x\right)  p\left(  x\right)  e^{-x^{2}}\right]  _{-z}%
^{z}-%
{\displaystyle\int\limits_{-z}^{z}}
-2x\phi\left(  x\right)  p\left(  x\right)  e^{-x^{2}}dx=L\left[  2x\phi
p\right]  ,
\end{gather*}
and we obtain (\ref{Pearson1}). Using the product rule, we see that equation
(\ref{Pearson2}) follows immediately from (\ref{Pearson1}).
\end{proof}

Using the adjoint operators defined by (\ref{adjoints}), we can write
$\phi\partial_{x}^{\ast}L\left[  p\right]  =-L\left[  \partial_{x}\left(  \phi
p\right)  \right]  ,$ and therefore the Pearson equation (\ref{Pearson1}) has
the form
\begin{equation}
\left(  \phi\partial_{x}^{\ast}+\psi\right)  L=0. \label{PL1}%
\end{equation}

\begin{remark}
For the linear functional (\ref{LH}) associated with the Hermite polynomials,
we have the Pearson equation
\begin{equation}
\partial_{x}^{\ast}L_{H}+2xL_{H}=0. \label{Pearson H}%
\end{equation}
Multiplying (\ref{Pearson H}) by $\phi\left(  x,z\right)  ,$ we get%
\begin{equation}
0=\phi\partial_{x}^{\ast}L_{H}+2x\phi L_{H}=\left(  \phi\partial_{x}^{\ast
}+\psi\right)  L_{H}, \label{PH1}%
\end{equation}
which yields another Pearson equation satisfied by $L_{H}$ but that has no
minimal degree. Note that (\ref{PH1}) is equivalent to (\ref{PL1}).
\end{remark}

\subsection{Moments}

It follows from the definition of $L$ that the odd moments are zero. Setting
$s=x^{2}$ in (\ref{moment}), we have%

\begin{equation}
\mu_{2n}\left(  z\right)  =2%
{\displaystyle\int\limits_{0}^{z}}
x^{2n}e^{-x^{2}}dx=%
{\displaystyle\int\limits_{0}^{z^{2}}}
s^{n-\frac{1}{2}}e^{-s}ds=\widehat{\gamma}\left(  n+\frac{1}{2},z^{2}\right)
, \label{mu 2n}%
\end{equation}
where the \emph{incomplete gamma function} $\widehat{\gamma}\left(
a,z\right)  $ is defined by \cite[8.2.1]{MR2723248}
\[
\widehat{\gamma}\left(  a,z\right)  =%
{\displaystyle\int\limits_{0}^{z}}
t^{a-1}e^{-t}dt.
\]
Note that we use the nonstandard notation $\widehat{\gamma}\left(  a,z\right)
$ to distinguish the incomplete gamma function from the coefficient
$\gamma_{n}\left(  z\right)  $ in the recurrence relation (\ref{3-term L}).

The function $\widehat{\gamma}\left(  a,z\right)  $ has the hypergeometric
representation \cite[8.5.1]{MR2723248}%
\begin{equation}
\widehat{\gamma}\left(  a,z\right)  =a^{-1}z^{a}e^{-z}\ _{1}F_{1}\left(
\begin{array}
[c]{c}%
1\\
a+1
\end{array}
;z\right)  , \label{hyper}%
\end{equation}
where the (generalized) \emph{hypergeometric function}$\ _{p}F_{q}$\ is
defined by \cite[16.2.1]{MR2723248}%
\[
_{p}F_{q}\left(
\begin{array}
[c]{c}%
a_{1},\ldots,a_{p}\\
b_{1},\ldots,b_{q}%
\end{array}
;z\right)  =%
{\displaystyle\sum\limits_{x=0}^{\infty}}
\frac{\left(  a_{1}\right)  _{x}\cdots\left(  a_{p}\right)  _{x}}{\left(
b_{1}\right)  _{x}\cdots\left(  b_{q}\right)  _{x}}\frac{z^{x}}{x!},
\]
and the \emph{Pochhammer symbol} is defined by \cite[5.2.4]{MR2723248}
\begin{equation}
\left(  c\right)  _{n}=%
{\displaystyle\prod\limits_{j=0}^{n-1}}
\left(  c+j\right)  ,\quad n\in\mathbb{N},\quad\left(  c\right)  _{0}=1.
\label{poch poly}%
\end{equation}

Using (\ref{hyper}) in (\ref{mu 2n}), we get%
\begin{equation}
\mu_{2n}\left(  z\right)  =\frac{2}{2n+1}z^{2n+1}e^{-z^{2}}\ _{1}F_{1}\left(
\begin{array}
[c]{c}%
1\\
n+\frac{3}{2}%
\end{array}
;z^{2}\right)  , \label{mu hyper}%
\end{equation}
and for $n=0$ we have%
\begin{equation}
\mu_{0}\left(  z\right)  =2%
{\displaystyle\int\limits_{0}^{z}}
e^{-x^{2}}dx=\sqrt{\pi}\operatorname{erf}\left(  z\right)  , \label{mu0}%
\end{equation}
where the $\operatorname{erf}\left(  z\right)  $ denotes the \emph{error
function} \cite[7.2.1]{MR2723248}. Using the recurrence relation
\cite[8.8.1]{MR2723248}%
\[
\widehat{\gamma}\left(  a+1,z\right)  =a\widehat{\gamma}\left(  a,z\right)
-z^{a}e^{-z},
\]
we get%
\begin{equation}
\mu_{2n+2}\left(  z\right)  =\left(  n+\frac{1}{2}\right)  \mu_{2n}\left(
z\right)  -z^{2n+1}e^{-z^{2}}. \label{mu req}%
\end{equation}
In particular,%
\[
\mu_{2}\left(  z\right)  =\frac{1}{2}\mu_{0}\left(  z\right)  -\sqrt
{z}e^{-z^{2}}=\frac{\sqrt{\pi}}{2}\operatorname{erf}\left(  z\right)
-ze^{-z^{2}}.
\]

To obtain a second order homogeneous recurrence equation that the moments
satisfy, we can use the Pearson equation (\ref{Pearson2}).

\begin{proposition}
Let $u_{n}\left(  z\right)  $ be defined by
\begin{equation}
u_{n}\left(  z\right)  =\mu_{2n}\left(  z\right)  . \label{un}%
\end{equation}
Then, $u_{n}\left(  z\right)  $ satisfies the recurrence equation%
\begin{equation}
2u_{n+2}-\left(  2n+3+2z^{2}\right)  u_{n+1}+\left(  2n+1\right)  z^{2}%
u_{n}=0, \label{un req}%
\end{equation}
with initial conditions%
\[
u_{0}=\sqrt{\pi}\operatorname{erf}\left(  z\right)  ,\quad u_{1}=\frac
{\sqrt{\pi}}{2}\operatorname{erf}\left(  z\right)  -ze^{-z^{2}}.
\]

\end{proposition}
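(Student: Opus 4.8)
The plan is to obtain (\ref{un req}) directly from the Pearson equation (\ref{Pearson2}) by feeding it a single well-chosen test polynomial, as announced in the paragraph preceding the statement. Since $L$ is symmetric, all odd moments vanish, so I would take $p(x)=x^{2n+1}$; the odd degree guarantees that after differentiation and multiplication by $\phi$ only \emph{even} powers of $x$ survive, and hence only the even moments $u_k=\mu_{2k}$ enter the resulting identity.

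First I would evaluate the left-hand side of (\ref{Pearson2}). With $p=x^{2n+1}$ we have $\partial_x p=(2n+1)x^{2n}$, so using $\phi=x^2-z^2$,
\[
L\left[\phi\,\partial_x p\right]=(2n+1)\left(\mu_{2n+2}-z^2\mu_{2n}\right)=(2n+1)\left(u_{n+1}-z^2u_n\right).
\]
For the right-hand side, $2x(\phi-1)p=2\left(x^2-z^2-1\right)x^{2n+2}$, whence
\[
L\left[2x(\phi-1)p\right]=2\left(\mu_{2n+4}-(z^2+1)\mu_{2n+2}\right)=2\left(u_{n+2}-(z^2+1)u_{n+1}\right).
\]
Equating the two expressions and collecting the coefficients of $u_{n+2},u_{n+1},u_n$ reproduces (\ref{un req}) after rearranging, since $2(z^2+1)+(2n+1)=2n+3+2z^2$; this last step is routine algebra. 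The initial conditions are immediate: $u_0=\mu_0=\sqrt{\pi}\operatorname{erf}(z)$ by (\ref{mu0}), and $u_1=\mu_2=\frac{\sqrt{\pi}}{2}\operatorname{erf}(z)-ze^{-z^2}$ by the explicit value of $\mu_2$ recorded just above the statement.

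There is no genuine obstacle here; the one nonmechanical decision is the choice $p=x^{2n+1}$, and the reason it works is precisely that an odd test polynomial couples three consecutive even moments $\mu_{2n},\mu_{2n+2},\mu_{2n+4}$ while automatically eliminating the inhomogeneous term $z^{2n+1}e^{-z^2}$ that appears in the first-order recurrence (\ref{mu req}). An equivalent derivation would take two consecutive copies of (\ref{mu req}) and cancel that exponential term between them, but the Pearson-equation route is shorter and stays entirely within the language of the functional $L$.
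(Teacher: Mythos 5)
Your proposal is correct and follows exactly the paper's own route: applying the Pearson equation (\ref{Pearson2}) with $p(x)=x^{2n+1}$ to couple the three even moments $u_{n},u_{n+1},u_{n+2}$, then rearranging, with the initial conditions read off from (\ref{mu0}) and the displayed value of $\mu_{2}$. No discrepancies to report.
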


\begin{proof}
Using (\ref{Pearson2}) with $p\left(  x\right)  =x^{2n+1}$, we have%
\[
L\left[  \left(  2n+1\right)  \left(  x^{2}-z^{2}\right)  x^{2n}\right]
=L\left[  2x\left(  x^{2}-z^{2}-1\right)  x^{2n+1}\right]  ,
\]
or%
\[
\left(  2n+1\right)  \left(  \mu_{2n+2}-z^{2}\mu_{2n}\right)  =2\left[
\mu_{2n+4}-\left(  z^{2}+1\right)  \mu_{2n+2}\right]  .
\]

\end{proof}

\begin{remark}
From the asymptotic expansion \cite[8.11.2]{MR2723248}%
\[
\widehat{\gamma}\left(  a,z\right)  \sim\Gamma\left(  a\right)  \left[
1-z^{a-1}e^{-z}%
{\displaystyle\sum\limits_{k\geq0}}
\frac{z^{-k}}{\Gamma\left(  a-k\right)  }\right]  ,\quad z\rightarrow\infty,
\]
in (\ref{mu 2n}), we see that for fixed $n$%
\begin{equation}
u_{n}\left(  z\right)  \sim\Gamma\left(  n+\frac{1}{2}\right)  \left[
1-e^{-z^{2}}%
{\displaystyle\sum\limits_{k\geq0}}
\frac{z^{2\left(  n-k\right)  -1}}{\Gamma\left(  n+\frac{1}{2}-k\right)
}\right]  ,\quad z\rightarrow\infty. \label{un asymp}%
\end{equation}

The even moments of the Hermite polynomials are \cite[5.9.1]{MR2723248}%
\[
\mu_{2n}^{H}=%
{\displaystyle\int\limits_{0}^{\infty}}
s^{n-\frac{1}{2}}e^{-s}ds=\Gamma\left(  n+\frac{1}{2}\right)  ,
\]
and therefore we can rewrite (\ref{un asymp}) as the ratio asymptotics
\[
\frac{\mu_{2n}\left(  z\right)  }{\mu_{2n}^{H}}\sim1-e^{-z^{2}}%
{\displaystyle\sum\limits_{k\geq0}}
\frac{z^{2\left(  n-k\right)  -1}}{\Gamma\left(  n+\frac{1}{2}-k\right)
},\quad z\rightarrow\infty.
\]

\end{remark}

\subsection{Stieltjes function}

The \emph{Stieltjes function} associated with a linear functional
$\mathfrak{L}$ is defined by \cite{MR4331433}, \cite{MR4331433},
\cite{MR1270222}%
\begin{equation}
S\left(  t\right)  =\mathfrak{L}\left[  \frac{1}{t-x}\right]  =%
{\displaystyle\sum\limits_{n\geq0}}
\frac{\mu_{n}}{t^{n+1}}, \label{S}%
\end{equation}
where the sum is a formal power series.

\begin{proposition}
The Stieltjes function $S\left(  t;z\right)  $ associated with the linear
functional $L$ satisfies the first order \textbf{nonhomogeneous} ODE%
\begin{equation}
\phi\left(  t;z\right)  \partial_{t}S+\psi\left(  t;z\right)  S=\left[
2\phi\left(  t;z\right)  -1\right]  u_{0}\left(  z\right)  +2u_{1}\left(
z\right)  , \label{ODES}%
\end{equation}
where $\phi\left(  x;z\right)  $ and $\psi\left(  x;z\right)  $ were defined
in (\ref{phi}).
\end{proposition}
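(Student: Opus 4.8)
The plan is to substitute the resolvent kernel $p(x)=\frac{1}{t-x}$ into the Pearson equation (\ref{Pearson1}) and then read off a differential relation for $S$. The one point that needs care up front is that $\frac{1}{t-x}$ is not a polynomial, so (\ref{Pearson1}) does not apply to it automatically. However, inspecting the proof of (\ref{Pearson1}) shows that the only place polynomiality was used is in discarding the boundary term $[\phi(x)p(x)e^{-x^2}]_{-z}^{z}$, and for $\phi(x;z)=x^2-z^2$ this term vanishes for \emph{any} function regular at $\pm z$, simply because $\phi(\pm z)=z^2-z^2=0$. Hence, for $t\notin[-z,z]$ the identity $L\!\left[\partial_x\!\left(\phi\,\frac{1}{t-x}\right)\right]=L\!\left[\psi\,\frac{1}{t-x}\right]$ holds verbatim. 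Alternatively, one can run the whole argument on the formal power series (\ref{S}), where it amounts to a repackaging of the moment recurrence (\ref{un req}); I prefer the resolvent form because it avoids the index bookkeeping.

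With that in hand, I would expand both sides into quantities involving $S$ and $S'$. Differentiating (\ref{S}) with respect to $t$ gives $S'(t)=-L\!\left[\frac{1}{(t-x)^2}\right]$, which is the only new object needed. On the left, $\partial_x\!\left(\phi\,\frac{1}{t-x}\right)=\frac{\phi'}{t-x}+\frac{\phi}{(t-x)^2}$, and I would collapse each term using the elementary divisions $\frac{\phi(x)}{(t-x)^2}=\frac{\phi(t)}{(t-x)^2}-\frac{t+x}{t-x}$ and $\frac{2x}{t-x}=-2+\frac{2t}{t-x}$, together with $\mu_1=0$ and $\mu_0=u_0$. Every term then reduces to $S$, $S'$, or the single moment $u_0$, and the left side simplifies to $-\phi(t)S'(t)-u_0$.

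On the right I would perform the polynomial division $\frac{\psi(t)-\psi(x)}{t-x}=2\!\left(t^2+tx+x^2-z^2\right)$, so that $L\!\left[\frac{\psi}{t-x}\right]=\psi(t)S(t)-2L\!\left[t^2+tx+x^2-z^2\right]$. Evaluating the remaining functional with $\mu_1=0$, $\mu_0=u_0$, $\mu_2=u_1$ gives $L\!\left[t^2+tx+x^2-z^2\right]=\phi(t)u_0+u_1$, so the right side becomes $\psi(t)S(t)-2\phi(t)u_0-2u_1$. Equating the two expressions yields $-\phi S'-u_0=\psi S-2\phi u_0-2u_1$; multiplying by $-1$ and regrouping the terms constant in $x$ gives exactly (\ref{ODES}).

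The genuinely delicate step is the first one: justifying the substitution of the non-polynomial $\frac{1}{t-x}$ into the Pearson equation. Once the vanishing of the boundary term is observed, everything downstream is forced --- routine polynomial division plus the symmetry $\mu_1=0$. It is worth emphasizing that this vanishing is not a lucky accident: the condition $\phi(\pm z)=0$ is precisely what selects $\phi(x;z)=x^2-z^2$ as the natural Pearson polynomial for the truncated functional, and it is what makes the resolvent computation close up so cleanly.
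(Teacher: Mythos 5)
Your proof is correct, but it takes a genuinely different route from the paper. The paper never touches the Cauchy kernel: it works with the formal series $S(t;z)=\sum_{n\geq 0}u_n(z)t^{-2n-1}$, derives shift identities such as $\sum_{n\geq 0}u_{n+k}t^{-2n-1}=t^{2k}\bigl(S-\sum_{n=0}^{k-1}u_n t^{-2n-1}\bigr)$ and $\sum_{n\geq 0}(2n+1)u_n t^{-2n-1}=-t\,\partial_t S$, and then resums the second order moment recurrence (\ref{un req}) term by term; the inhomogeneity $[2\phi-1]u_0+2u_1$ appears as the collection of boundary terms left over from the shifts. You instead substitute $p(x)=\frac{1}{t-x}$ directly into the Pearson equation (\ref{Pearson1}), which is legitimate exactly as you argue: the only use of polynomiality in the proof of (\ref{Pearson1}) is the vanishing of $\bigl[\phi\,p\,e^{-x^2}\bigr]_{-z}^{z}$, and since $\phi(\pm z;z)=0$ this holds for any $p$ regular on $[-z,z]$, in particular for the resolvent with $t\notin[-z,z]$. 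Your subsequent reductions are all verified: the left side collapses to $-\phi(t)S'-u_0$ via $\frac{\phi(x)}{(t-x)^2}=\frac{\phi(t)}{(t-x)^2}-\frac{t+x}{t-x}$, the right side to $\psi(t)S-2\phi(t)u_0-2u_1$ via the division of $\psi(t)-\psi(x)$ by $t-x$, and equating gives (\ref{ODES}). What each approach buys: the paper's computation is purely formal, so it requires no convergence or domain considerations and directly matches the formal-series definition (\ref{S}), but it presupposes the moment recurrence and involves some index bookkeeping; yours is more conceptual, exhibiting (\ref{ODES}) as the image of the Pearson equation under the Stieltjes transform (so the inhomogeneous term is transparently $L\bigl[\frac{\psi(t)-\psi(x)}{t-x}-\partial_x\phi-\frac{\phi(t)-\phi(x)}{(t-x)^2}\bigr]$), at the modest cost of either working on $|t|>z$ and invoking analyticity, or rerunning the same divisions at the level of formal series.
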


\begin{proof}
For the linear functional $L,$ the Stieltjes function reads%
\begin{equation}
S\left(  t;z\right)  =%
{\displaystyle\sum\limits_{n\geq0}}
\frac{u_{n}\left(  z\right)  }{t^{2n+1}}, \label{S1}%
\end{equation}
and therefore%
\begin{equation}%
{\displaystyle\sum\limits_{n\geq0}}
\left(  2n+1\right)  \frac{u_{n}(z)}{t^{2n+1}}=-t\partial_{t}S, \label{dS}%
\end{equation}
i. e.,
\begin{equation}%
{\displaystyle\sum\limits_{n\geq0}}
\frac{u_{n+k}(z)}{t^{2n+1}}=%
{\displaystyle\sum\limits_{n\geq k}}
\frac{u_{n}(z)}{t^{2n-2k+1}}=t^{2k}\left(  S-%
{\displaystyle\sum\limits_{n=0}^{k-1}}
\frac{u_{n}(z)}{t^{2n+1}}\right)  . \label{u n+k}%
\end{equation}
Using (\ref{dS}) and (\ref{u n+k}), we have%
\begin{gather*}
2t^{4}\left[  S-\left(  \frac{u_{0}(z)}{t}+\frac{u_{1}(z)}{t^{3}}\right)
\right]  +t^{2}\left(  t\partial_{t}S+\frac{u_{0}(z)}{t}\right) \\
-2z^{2}t^{2}\left(  S-\frac{u_{0}(z)}{t}\right)  -z^{2}t\partial_{t}S=0,
\end{gather*}
since
\[%
{\displaystyle\sum\limits_{n\geq0}}
\left(  2n+3\right)  \frac{u_{n+1}(z)}{t^{2n+1}}=t^{2}%
{\displaystyle\sum\limits_{n\geq1}}
\left(  2n+1\right)  \frac{u_{n}(z)}{t^{2n+1}}=t^{2}\left(  -t\partial
_{t}S-\frac{u_{0}(z)}{t}\right)  .
\]
After simplification, we obtain%
\[
\left(  t^{2}-z^{2}\right)  \left(  \partial_{t}S+2tS\right)  =\left(
2t^{2}-2z^{2}-1\right)  u_{0}(z)+2u_{1}(z).
\]

\end{proof}

Note that differentiating (\ref{ODES}) with respect to $t,$ we get
\[
\partial_{t}\left[  \left(  t^{2}-z^{2}\right)  \left(  \partial
_{t}S+2tS\right)  \right]  =4tu_{0}(z),
\]
and, therefore,%
\[
\partial_{t}\left(  \frac{\partial_{t}\left[  \left(  t^{2}-z^{2}\right)
\left(  \partial_{t}S+2tS\right)  \right]  }{t}\right)  =0.
\]
Thus, the function $S\left(  t;z\right)  $ satisfies the third order
\textbf{homogeneous} linear ODE with polynomial coefficients
\begin{gather*}
t\left(  t^{2}-z^{2}\right)  \partial_{t}^{3}S+\left(  2t^{4}-2t^{2}%
z^{2}+3t^{2}+z^{2}\right)  \partial_{t}^{2}S\\
+2t(5t^{2}-z^{2})\partial_{t}S+2(3t^{2}+z^{2})S=0.
\end{gather*}

\begin{remark}
For the Hermite polynomials, we have \cite[7.7.2]{MR2723248} \qquad%
\[
S_{H}\left(  t\right)  =%
{\displaystyle\int\limits_{-\infty}^{\infty}}
\frac{e^{-x^{2}}}{t-x}dx=-\mathrm{i}\pi\omega\left(  t\right)  ,\quad
\operatorname{Im}\left(  t\right)  >0\mathbf{,}%
\]
where $\mathrm{i}^{2}=-1,$ and the function $\omega\left(  t\right)  $ is
defined by \cite[7.2.3]{MR2723248}%
\[
\omega\left(  t\right)  =e^{-t^{2}}\left[  1-\operatorname{erf}\left(
-\mathrm{i}t\right)  \right]  .
\]
The function $\omega\left(  t\right)  $ satisfies \cite[7.10.2]{MR2723248}%
\[
\omega^{\prime}\left(  t\right)  =-2t\omega\left(  t\right)  +\frac
{2\mathrm{i}}{\sqrt{\pi}},
\]
and therefore%
\begin{equation}
S_{H}^{\prime}\left(  t\right)  =-2tS_{H}\left(  t\right)  +2\sqrt{\pi
}=-2tS_{H}\left(  t\right)  +2\mu_{0}^{H}. \label{ODESH}%
\end{equation}
Comparing (\ref{ODESH}) with (\ref{ODES}), we see that the Stieltjes functions
of the functionals $L$ and $L_{H}$ are related by%
\[
0=\phi\left(  t;z\right)  \left(  \partial_{t}S_{H}+2tS_{H}-2\mu_{0}%
^{H}\right)  =\phi\left(  t;z\right)  \left(  \partial_{t}S+2tS-2u_{0}\right)
+u_{0}-2u_{1}.
\]

\end{remark}

\section{Laguerre-Freud equations}

The (generally nonlinear) equations satisfied by the coefficients of the
three-term recurrence relation (\ref{3-term}) are known in the literature as
\emph{Laguerre-Freud equations} (see \cite{MR1272122}). They can be consider
discrete analogues of the Painlev\'{e} equations \cite{MR1705232}.\newline

First of all, we will find a second order nonlinear difference equation that
the parameters of the three term recurrence relation satisfy.\newline

\begin{theorem}
The coefficients $\gamma_{n}\left(  z\right)  $ satisfy the Laguerre-Freud
equation
\begin{equation}
\frac{z^{2}}{2}=\gamma_{n}\left(  \gamma_{n-1}+\gamma_{n}-z^{2}+\frac{1}%
{2}-n\right)  -\gamma_{n+1}\left(  \gamma_{n+1}+\gamma_{n+2}-z^{2}-n-\frac
{3}{2}\right)  . \label{LaguerreFreud1}%
\end{equation}

\end{theorem}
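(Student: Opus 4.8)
The plan is to exploit the relation that the Pearson equation (\ref{Pearson2}) imposes between $\partial_x P_n$ and the basis $\{P_k\}$, and then to turn the resulting identity — which a priori carries the cumulative sum $c_n=\sum_{k=1}^{n-1}\gamma_k$ — into a closed difference equation by differencing in $n$.

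First I would expand $\phi\,\partial_x P_n$ in the orthogonal basis, keeping only its top two coefficients. Since $\partial_x P_n$ involves only the powers $x^{n-1},x^{n-3},\dots$, multiplication by the even polynomial $\phi$ produces only $x^{n+1},x^{n-1},x^{n-3},\dots$, so by parity there is no $P_n$ term and
\[
\phi\,\partial_x P_n=\lambda_{n,n+1}P_{n+1}+\lambda_{n,n-1}P_{n-1}+\text{(terms }P_{n-3},P_{n-5},\dots).
\]
Matching the coefficient of $x^{n+1}$ gives $\lambda_{n,n+1}=n$, and matching that of $x^{n-1}$ — equivalently, multiplying (\ref{derP}) by $\phi=x^2-z^2$ and using (\ref{3-term L}) in the form $x^2P_{n-1}=P_{n+1}+(\gamma_n+\gamma_{n-1})P_{n-1}+\gamma_{n-1}\gamma_{n-2}P_{n-3}$ — gives
\[
\lambda_{n,n-1}=n\gamma_n+2c_n-nz^2 .
\]

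Next I would apply (\ref{Pearson2}) to the test polynomial $p=P_nP_{n+1}$. Setting $A_{n,m}:=L[\phi(\partial_x P_n)P_m]$, orthogonality (\ref{ortho}) gives $A_{n,m}=\lambda_{n,m}h_m$, so the lower-order tail never contributes. The product rule turns the left-hand side $L[\phi\,\partial_x(P_nP_{n+1})]$ into $A_{n,n+1}+A_{n+1,n}=n\,h_{n+1}+\lambda_{n+1,n}h_n$, where $\lambda_{n+1,n}=(n+1)\gamma_{n+1}+2c_{n+1}-(n+1)z^2$ is the formula above at index $n+1$ (by the same parity count, the coefficient of $P_n$ in $\phi\,\partial_x P_{n+1}$ is the subleading one). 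On the right-hand side I would expand $2x(\phi-1)=2x^3-2(z^2+1)x$ and use $xP_n=P_{n+1}+\gamma_nP_{n-1}$ together with $x^3P_n=P_{n+3}+(\gamma_{n+2}+\gamma_{n+1}+\gamma_n)P_{n+1}+\cdots$, only the $P_{n+1}$ components surviving under $L[\,\cdot\,P_{n+1}]$. Using $h_{n+1}=\gamma_{n+1}h_n$ from (\ref{gamma-h}) and cancelling $h_n$ yields the single-index identity
\[
(2n+1)\gamma_{n+1}+2c_{n+1}-(n+1)z^2=2\gamma_{n+1}\bigl(\gamma_{n+2}+\gamma_{n+1}+\gamma_n-z^2-1\bigr),
\]
which I denote $(\ast)_n$.

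The one genuine obstruction is the cumulative term $c_{n+1}$, which keeps $(\ast)_n$ from being a closed recurrence in the $\gamma_k$ alone. I would remove it by forming $(\ast)_n-(\ast)_{n-1}$ and invoking (\ref{cn dn}) in the form $c_{n+1}-c_n=\gamma_n$, so that $2(c_{n+1}-c_n)$ telescopes to $2\gamma_n$. Expanding both sides and collecting the monomials in $\gamma_{n-1},\gamma_n,\gamma_{n+1},\gamma_{n+2}$ then rearranges to exactly twice equation (\ref{LaguerreFreud1}); dividing by $2$ completes the proof. I expect the only real care to lie in the bookkeeping of this last step — verifying that the cross term $\gamma_n\gamma_{n+1}$ cancels and that the $n$-dependent coefficients of $\gamma_n$ and $\gamma_{n+1}$ assemble into the stated $\tfrac12-n$ and $-n-\tfrac32$ — rather than in anything conceptual.
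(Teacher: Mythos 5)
Your proposal is correct and follows essentially the same route as the paper: apply the Pearson equation to the test polynomial $P_nP_{n+1}$, extract the identity $(\ast)_n$ (which is the paper's intermediate equation (\ref{Eq1}) after moving the term $2\gamma_{n+1}h_n$ across the equality, reflecting your use of (\ref{Pearson2}) instead of (\ref{Pearson1})), and then eliminate the cumulative sum by subtracting the shifted identity via $c_{n+1}-c_n=\gamma_n$. The coefficient computations ($\lambda_{n,n+1}=n$, $\lambda_{n,n-1}=n\gamma_n+2c_n-nz^2$) match the paper's evaluation of $L[\phi P_{n+1}\partial_xP_n]$ and $L[\phi P_n\partial_xP_{n+1}]$, and the final differencing does produce (\ref{LaguerreFreud1}).
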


\begin{proof}
Taking $p(x)=P_{n}P_{n+1}$ in (\ref{Pearson1}), we get
\begin{equation}
L\left[  \partial_{x}\left(  \phi P_{n}P_{n+1}\right)  \right]  =L\left[  \psi
P_{n}P_{n+1}\right]  . \label{L1}%
\end{equation}
The left hand side gives%
\[
L\left[  \partial_{x}\left(  \phi P_{n}P_{n+1}\right)  \right]  =2L\left[
xP_{n}P_{n+1}\right]  +L\left[  (x^{2}-z^{2})\left(  P_{n+1}\partial_{x}%
P_{n}+P_{n}\partial_{x}P_{n+1}\right)  \right]
\]
and using (\ref{ortho}), we have%
\begin{align*}
2L\left[  xP_{n}P_{n+1}\right]   &  =2h_{n+1},\quad L\left[  \left(
x^{2}-z^{2}\right)  P_{n+1}\partial_{x}P_{n}\right]  =nh_{n+1},\\
L\left[  -z^{2}P_{n}\partial_{x}P_{n+1}\right]   &  =-z^{2}\left(  n+1\right)
h_{n}.
\end{align*}
From (\ref{3-term L}), we see that
\begin{equation}
x^{2}P_{n}=P_{n+2}+\left(  \gamma_{n+1}+\gamma_{n}\right)  P_{n}+\gamma
_{n-1}\gamma_{n}P_{n-2}, \label{x^2}%
\end{equation}
and using (\ref{derP}) we get%
\[
\partial_{x}P_{n+1}=\left(  n+1\right)  P_{n}+\left[  2c_{n+1}-\left(
n+1\right)  \gamma_{n}\right]  P_{n-2}+O\left(  x^{n-4}\right)  .
\]
Hence,
\[
L\left[  x^{2}P_{n}\partial_{x}P_{n+1}\right]  =\left(  n+1\right)  \left(
\gamma_{n+1}+\gamma_{n}\right)  h_{n}+\left[  2c_{n+1}-\left(  n+1\right)
\gamma_{n}\right]  \gamma_{n-1}\gamma_{n}h_{n-2}.
\]
Using (\ref{gamma-h}), we conclude that%
\[
L\left[  x^{2}P_{n}\partial_{x}P_{n+1}\right]  =\left(  n+1\right)  \left(
\gamma_{n+1}+\gamma_{n}\right)  h_{n}+\left[  2c_{n+1}-\left(  n+1\right)
\gamma_{n}\right]  h_{n},
\]
and therefore%
\[
L\left[  \partial_{x}\left(  \phi P_{n}P_{n+1}\right)  \right]  =\left[
\left(  2n+3\right)  \gamma_{n+1}+2c_{n+1}-\left(  n+1\right)  z^{2}\right]
h_{n}.
\]

The right hand side in (\ref{L1}) reads%
\[
L\left[  \psi P_{n}P_{n+1}\right]  =L\left[  2x(x^{2}-z^{2})P_{n}%
P_{n+1}\right]  ,
\]
and since (\ref{3-term L}) gives%
\begin{equation}%
\begin{tabular}
[c]{l}%
$x^{3}P_{n}=P_{n+3}+\left(  \gamma_{n+2}+\gamma_{n+1}+\gamma_{n}\right)
P_{n+1}$\\
$+\gamma_{n}\left(  \gamma_{n+1}+\gamma_{n}+\gamma_{n-1}\right)
P_{n-1}+\gamma_{n}\gamma_{n-1}\gamma_{n-2}P_{n-3},$%
\end{tabular}
\label{x^3}%
\end{equation}
we have%
\[
L\left[  \psi P_{n}P_{n+1}\right]  =2\left(  \gamma_{n+2}+\gamma_{n+1}%
+\gamma_{n}\right)  h_{n+1}-2z^{2}h_{n+1}%
\]
or, using (\ref{gamma-h}),%
\[
L\left[  \psi P_{n}P_{n+1}\right]  =2\gamma_{n+1}\left(  \gamma_{n+2}%
+\gamma_{n+1}+\gamma_{n}-z^{2}\right)  h_{n}.
\]

Thus,
\begin{equation}
\left(  2n+3\right)  \gamma_{n+1}+2c_{n+1}-\left(  n+1\right)  z^{2}%
=2\gamma_{n+1}\left(  \gamma_{n+2}+\gamma_{n+1}+\gamma_{n}-z^{2}\right)  ,
\label{Eq1}%
\end{equation}
and shifting $n\rightarrow n-1,$%
\begin{equation}
\left(  2n+1\right)  \gamma_{n}+2c_{n}-nz^{2}=2\gamma_{n}\left(  \gamma
_{n+1}+\gamma_{n}+\gamma_{n-1}-z^{2}\right)  . \label{Eq2}%
\end{equation}
Subtracting (\ref{Eq2}) from (\ref{Eq1}) and using (\ref{cn dn}), we obtain%
\[
\left(  2n+3\right)  \gamma_{n+1}-\left(  2n-1\right)  \gamma_{n}%
-z^{2}=2\gamma_{n+1}\left(  \gamma_{n+2}+\gamma_{n+1}-z^{2}\right)
-2\gamma_{n}\left(  \gamma_{n}+\gamma_{n-1}-z^{2}\right)
\]
and the result follows.
\end{proof}

\begin{remark}
Notice that the nonlinear equation of order $2$ (\ref{LaguerreFreud1})
involves $4$ consecutive terms of the sequence of parameters $\gamma
_{n}\left(  z\right)  .$
\end{remark}

As an alternative to (\ref{LaguerreFreud1}) we will next deduce a third order
nonlinear difference equation involving $3$ consecutive terms of the sequence
of parameters $\gamma_{n}\left(  z\right)  .$ Thus, the computation of them is
more accurate.\newline

\begin{theorem}
The coefficients $\gamma_{n}\left(  z\right)  $ satisfy the equation
\begin{equation}
\gamma_{n}\left(  n+\frac{1}{2}-\gamma_{n}-\gamma_{n+1}\right)  \left(
n-\frac{1}{2}-\gamma_{n}-\gamma_{n-1}\right)  =z^{2}\left(  \frac{n}{2}%
-\gamma_{n}\right)  ^{2}. \label{LF}%
\end{equation}

\end{theorem}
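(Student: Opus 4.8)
The plan is to bypass the cumulative coefficients $c_n$ entirely and instead express every factor appearing in (\ref{LF}) as a boundary value of the polynomials at the endpoint $x=z$. The engine is a single integration by parts in which, crucially and unlike the derivation of the Pearson equation (\ref{Pearson1}), the test polynomial is \emph{not} premultiplied by $\phi=x^2-z^2$; consequently the boundary term at $\pm z$ does not vanish and it is precisely this term that records the truncation. Throughout I abbreviate $C_n:=\tfrac n2-\gamma_n$, so that the right-hand side of (\ref{LF}) is $z^2C_n^2$.

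First I would compute $L[\partial_x(P_nP_{n-1})]$ two ways. On one side, by (\ref{derP}) we have $\partial_xP_n=nP_{n-1}+O(x^{n-3})$ while $\deg(\partial_xP_{n-1})=n-2$, so orthogonality gives $L[\partial_x(P_nP_{n-1})]=nh_{n-1}$. On the other side, integrating by parts against $e^{-x^2}$ gives $[P_nP_{n-1}e^{-x^2}]_{-z}^{z}+2L[xP_nP_{n-1}]$, where the integral term equals $2h_n$ by (\ref{gamma-h}); since $P_nP_{n-1}$ is odd, the symmetry relation (\ref{Pn odd}) makes the two endpoints add, turning the boundary contribution into $2e^{-z^2}P_n(z)P_{n-1}(z)$. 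Equating the two evaluations and dividing by $h_{n-1}$ yields the key identity
\[
C_n=\tfrac n2-\gamma_n=e^{-z^2}\frac{P_n(z)P_{n-1}(z)}{h_{n-1}}.
\]

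Next I would show that the two mixed factors in (\ref{LF}) collapse to sums of consecutive $C_k$. Substituting $\gamma_k=\tfrac k2-C_k$ into $n+\tfrac12-\gamma_n-\gamma_{n+1}$ and into $n-\tfrac12-\gamma_n-\gamma_{n-1}$, all the constants cancel and one gets $n+\tfrac12-\gamma_n-\gamma_{n+1}=C_n+C_{n+1}$ and $n-\tfrac12-\gamma_n-\gamma_{n-1}=C_{n-1}+C_n$. Then I would evaluate (\ref{3-term L}) at $x=z$, i.e. $P_{n+1}(z)=zP_n(z)-\gamma_nP_{n-1}(z)$, and insert it into $C_n+C_{n+1}$: after factoring out $e^{-z^2}P_n(z)/h_n$ and using $h_n=\gamma_nh_{n-1}$, the term $\gamma_nP_{n-1}(z)$ cancels and one obtains the single-square formula
\[
C_n+C_{n+1}=ze^{-z^2}\frac{P_n(z)^2}{h_n}.
\]

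Finally I would substitute. Using this display and its shift $n\to n-1$ together with $\gamma_n=h_n/h_{n-1}$,
\[
\gamma_n(C_n+C_{n+1})(C_{n-1}+C_n)=\frac{h_n}{h_{n-1}}\Bigl(ze^{-z^2}\frac{P_n(z)^2}{h_n}\Bigr)\Bigl(ze^{-z^2}\frac{P_{n-1}(z)^2}{h_{n-1}}\Bigr)=z^2e^{-2z^2}\frac{P_n(z)^2P_{n-1}(z)^2}{h_{n-1}^2},
\]
and the key identity recognizes the right-hand side as exactly $z^2C_n^2$, which is (\ref{LF}). I expect the only delicate step to be the first one: the whole identity rests on getting the boundary term right (its nonvanishing, the factor $2$ coming from the parity of $P_nP_{n-1}$, and the sign), since this is where the truncation enters; Steps~2--3 are then routine algebra and a single use of the recurrence at $x=z$. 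As a sanity check, at $n=1$ one has $\gamma_0=0$ and (\ref{LF}) reduces to $\gamma_1(\tfrac32-\gamma_1-\gamma_2)=z^2(\tfrac12-\gamma_1)$, which is precisely the moment recurrence (\ref{mu req}) for $n=1$.
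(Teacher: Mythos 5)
Your proposal is correct and follows essentially the same route as the paper: both proofs hinge on the nonvanishing boundary terms at $x=\pm z$, first establishing $e^{-z^{2}}P_{n}(z;z)P_{n-1}(z;z)=\left(\tfrac{n}{2}-\gamma_{n}\right)h_{n-1}$ by integration by parts (the paper's (\ref{LF5})) and then combining it with the identity $ze^{-z^{2}}P_{n}^{2}(z;z)=\left(n+\tfrac12-\gamma_{n}-\gamma_{n+1}\right)h_{n}$ (the paper's (\ref{LF6})) exactly as you do in your final display. The only, mildly streamlining, difference is that you obtain the second identity from the first by evaluating the recurrence (\ref{3-term L}) at $x=z$, whereas the paper derives it by a second integration by parts applied to $L\left[\partial_{x}\left(xP_{n}^{2}\right)\right]$.
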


\begin{proof}
From (\ref{beta, gamma}), we have%
\[
h_{n-1}\left(  z\right)  \gamma_{n}\left(  z\right)  =L\left[  xP_{n}%
P_{n-1}\right]  =-\frac{1}{2}\left[  P_{n}P_{n-1}e^{-x^{2}}\right]  _{-z}%
^{z}+\frac{1}{2}L\left[  \partial_{x}\left(  P_{n}P_{n-1}\right)  \right]  ,
\]
while (\ref{Pn odd}) gives
\begin{align*}
\left[  P_{n}P_{n-1}e^{-x^{2}}\right]  _{-z}^{z}  &  =P_{n}\left(  z;z\right)
P_{n-1}\left(  z;z\right)  e^{-z^{2}}-\left(  -1\right)  ^{2n-1}P_{n}\left(
z;z\right)  P_{n-1}\left(  z;z\right)  e^{-z^{2}}\\
&  =2P_{n}\left(  z;z\right)  P_{n-1}\left(  z;z\right)  e^{-z^{2}}.
\end{align*}
But using (\ref{ortho}), we see that $L\left[  P_{n}\partial_{x}%
P_{n-1}\right]  =0,$ and therefore%
\begin{equation}
h_{n-1}\left(  z\right)  \gamma_{n}\left(  z\right)  =-P_{n}\left(
z;z\right)  P_{n-1}\left(  z;z\right)  e^{-z^{2}}+\frac{1}{2}L\left[
\partial_{x}P_{n}P_{n-1}\right]  . \label{LF3}%
\end{equation}
Since $P_{n}\left(  x;z\right)  =x^{n}+O\left(  x^{n-1}\right)  ,$ we have%
\begin{equation}
\partial_{x}P_{n}\left(  x;z\right)  =nx^{n-1}+O\left(  x^{n-2}\right)
=nP_{n-1}\left(  x;z\right)  +O\left(  x^{n-2}\right)  , \label{Pn diff}%
\end{equation}
and hence%
\begin{equation}
L\left[  P_{n-1}\partial_{x}P_{n}\right]  =nh_{n-1}\left(  z\right)  .
\label{LF4}%
\end{equation}
From (\ref{LF3}) and (\ref{LF4}), we conclude that
\begin{equation}
P_{n}\left(  z;z\right)  P_{n-1}\left(  z;z\right)  e^{-z^{2}}=\left[
\frac{n}{2}-\gamma_{n}\left(  z\right)  \right]  h_{n-1}\left(  z\right)  .
\label{LF5}%
\end{equation}

On the other hand, if follows from (\ref{3-term L}) that%
\[
x^{2}P_{n}^{2}(x;z)=\left(  P_{n+1}(x;z)+\gamma_{n}P_{n-1}(x;z)\right)
^{2}=P_{n+1}^{2}(x;z)+2\gamma_{n}P_{n-1}(x;z)P_{n+1}(x;z)+\gamma_{n}%
^{2}P_{n-1}^{2}(x;z).
\]
Thus (\ref{ortho}) and (\ref{gamma-h}) give%
\begin{equation}
L\left[  x^{2}P_{n}^{2}\right]  =h_{n+1}\left(  z\right)  +\gamma_{n}%
^{2}\left(  z\right)  h_{n-1}\left(  z\right)  =h_{n+1}\left(  z\right)
+\gamma_{n}\left(  z\right)  h_{n}\left(  z\right)  . \label{x2P2}%
\end{equation}
But%
\[
L\left[  \partial_{x}\left(  xP_{n}^{2}\right)  \right]  =\left[  xP_{n}%
^{2}e^{-x^{2}}\right]  _{-z}^{z}+2L\left[  x^{2}P_{n}^{2}\right]  =2zP_{n}%
^{2}\left(  z;z\right)  e^{-z^{2}}+2L\left[  x^{2}P_{n}^{2}\right]  ,
\]
and, as a consequence, we obtain%
\begin{equation}
L\left[  \partial_{x}\left(  xP_{n}^{2}\right)  \right]  =2zP_{n}^{2}\left(
z;z\right)  e^{-z^{2}}+2\left[  h_{n+1}\left(  z\right)  +\gamma_{n}%
^{2}\left(  z\right)  h_{n-1}\left(  z\right)  \right]  . \label{LF1}%
\end{equation}

On the other hand, since%
\[
xP_{n}\left(  x;z\right)  \partial_{x}P_{n}\left(  x;z\right)  =P_{n}\left(
x;z\right)  \left[  nx^{n}+O\left(  x^{n-1}\right)  \right]  ,
\]
and using (\ref{ortho}) we get%
\begin{equation}
L\left[  \partial_{x}\left(  xP_{n}^{2}\right)  \right]  =L\left[  P_{n}%
^{2}\right]  +L\left[  2xP_{n}\partial_{x}P_{n}\right]  =(2n+1) h_{n}\left(
z\right)  . \label{LF2}%
\end{equation}
From (\ref{LF1}) and (\ref{LF2}), we conclude that%
\begin{equation}
P_{n}^{2}\left(  z;z\right)  e^{-z^{2}}=\frac{\left(  2n+1\right)
h_{n}\left(  z\right)  -2\left[  h_{n+1}\left(  z\right)  +\gamma_{n}%
^{2}\left(  z\right)  h_{n-1}\left(  z\right)  \right]  }{2z}. \label{LF6}%
\end{equation}

Squaring (\ref{LF5}), we have%
\[
P_{n}^{2}\left(  z;z\right)  P_{n-1}^{2}\left(  z;z\right)  e^{-2z^{2}%
}=\left[  \frac{n}{2}-\gamma_{n}\left(  z\right)  \right]  ^{2}h_{n-1}%
^{2}\left(  z\right)  ,
\]
and using (\ref{LF6}), we obtain%
\begin{align*}
&  \frac{\left(  2n+1\right)  h_{n}-2\left(  h_{n+1}+\gamma_{n}^{2}%
h_{n-1}\right)  }{2z}\frac{\left(  2n-1\right)  h_{n-1}-2\left(  h_{n}%
+\gamma_{n-1}^{2}h_{n-2}\right)  }{2z}\\
&  =\left(  \frac{n}{2}-\gamma_{n}\right)  ^{2}h_{n-1}^{2}.
\end{align*}
Dividing by $h_{n-1}^{2}\left(  z\right)  $ and using (\ref{gamma-h}), we get%
\[
\frac{\gamma_{n}\left(  2n-2\gamma_{n}-2\gamma_{n+1}+1\right)  \left(
2n-2\gamma_{n}-2\gamma_{n-1}-1\right)  }{4z^{2}}=\left(  \frac{n}{2}%
-\gamma_{n}\right)  ^{2},
\]
since%
\[
\frac{h_{n+1}\left(  z\right)  }{h_{n}\left(  z\right)  }=\gamma_{n+1}(z),
n\geq0.
\]

\end{proof}

\begin{corollary}
Let $g_{n}\left(  z\right)  $ be defined by%
\begin{equation}
g_{n}\left(  z\right)  =\frac{n}{2}-\gamma_{n}\left(  z\right)  . \label{gn}%
\end{equation}
Then, $g_{n}\left(  z\right)  $ satisfies the nonlinear recurrence
\begin{equation}
\left(  \frac{n}{2}-g_{n}\right)  \allowbreak\left(  g_{n}+g_{n+1}\right)
\allowbreak\left(  g_{n}+g_{n-1}\right)  =z^{2}g_{n}^{2}. \label{LF gn}%
\end{equation}

\end{corollary}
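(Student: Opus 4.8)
The plan is to derive (\ref{LF gn}) as an immediate algebraic consequence of the Laguerre-Freud equation (\ref{LF}) proved in the preceding theorem, simply by rewriting every occurrence of $\gamma_n$ in terms of the new variable $g_n$. From the definition (\ref{gn}) we have $\gamma_n=\frac{n}{2}-g_n$, and correspondingly $\gamma_{n+1}=\frac{n+1}{2}-g_{n+1}$ and $\gamma_{n-1}=\frac{n-1}{2}-g_{n-1}$.

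First I would record the three identities that make the substitution collapse cleanly. The factor on the right of (\ref{LF}) is handled by $\frac{n}{2}-\gamma_n=g_n$, so that $z^{2}\left(\frac{n}{2}-\gamma_n\right)^{2}=z^{2}g_n^{2}$. For the two remaining factors on the left I would compute $\gamma_n+\gamma_{n+1}=n+\frac{1}{2}-g_n-g_{n+1}$ and $\gamma_n+\gamma_{n-1}=n-\frac{1}{2}-g_n-g_{n-1}$, whence
\[
n+\tfrac{1}{2}-\gamma_n-\gamma_{n+1}=g_n+g_{n+1},\qquad
n-\tfrac{1}{2}-\gamma_n-\gamma_{n-1}=g_n+g_{n-1}.
\]

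Substituting these identities together with $\gamma_n=\frac{n}{2}-g_n$ into (\ref{LF}) converts its left-hand side into $\left(\frac{n}{2}-g_n\right)\left(g_n+g_{n+1}\right)\left(g_n+g_{n-1}\right)$ and its right-hand side into $z^{2}g_n^{2}$, which is precisely the asserted recurrence (\ref{LF gn}). There is no genuine obstacle here: the only point worth checking is that the half-integer shifts $\pm\frac{1}{2}$ appearing in (\ref{LF}) are exactly the ones generated by the index shifts in $\frac{n\pm1}{2}$, so that the constant parts cancel and no residual terms survive. I would verify this bookkeeping once, and the result then follows directly.
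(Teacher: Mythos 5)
Your substitution is correct and the half-integer bookkeeping checks out: $n+\tfrac{1}{2}-\gamma_n-\gamma_{n+1}=g_n+g_{n+1}$, $n-\tfrac{1}{2}-\gamma_n-\gamma_{n-1}=g_n+g_{n-1}$, and $\gamma_n=\tfrac{n}{2}-g_n$ turn (\ref{LF}) directly into (\ref{LF gn}). This is exactly the (omitted) argument the paper intends for this corollary, so nothing further is needed.
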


\begin{remark}
For the Hermite polynomials $H_{n}\left(  x\right)  $, we have%
\[
xH_{n}\left(  x\right)  =H_{n+1}\left(  x\right)  +\frac{n}{2}H_{n-1}\left(
x\right)  ,
\]
and therefore%
\[
\gamma_{H}\left(  n\right)  =\frac{n}{2}.
\]
Thus, from (\ref{gn}) we see that $g_{n}\rightarrow0$ as $z\rightarrow\infty.$
\end{remark}

To obtain an asymptotic expansion of $g_{n}\left(  z\right)  $ as
$n\rightarrow\infty,$ we can use the nonlinear recurrence (\ref{LF gn}).

\begin{theorem}
For $z=O\left(  1\right)  ,$ we have
\begin{equation}
g_{n}\left(  z\right)  \sim\frac{n}{2}-\frac{z^{2}}{4}-\frac{z^{2}}{16}%
n^{-2}-\frac{z^{4}}{16}n^{-3}-\frac{z^{2}}{64}(1+3z^{4})n^{-4}+ O (n^{-5})
\label{gn asymp}%
\end{equation}
as $n\rightarrow\infty.$
\end{theorem}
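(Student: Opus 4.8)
The plan is to treat the nonlinear recurrence (\ref{LF gn}) as a difference equation admitting an asymptotic solution in inverse powers of $n$, and to determine the coefficients by substitution and order-by-order matching. Since (\ref{gn asymp}) asserts that $g_n \sim \tfrac n2 - \tfrac{z^2}{4} - \cdots$, equivalently (by (\ref{gn})) that $\gamma_n = \tfrac n2 - g_n \to \tfrac{z^2}{4}$, I would posit the ansatz
\[
g_n(z) = \frac n2 + a_0 + a_1 n^{-1} + a_2 n^{-2} + a_3 n^{-3} + a_4 n^{-4} + O(n^{-5}),
\]
where $a_0,\dots,a_4$ are functions of $z$ to be found; the theorem then amounts to the claims $a_0=-\tfrac{z^2}{4}$, $a_1=0$, $a_2=-\tfrac{z^2}{16}$, $a_3=-\tfrac{z^4}{16}$, and $a_4=-\tfrac{z^2}{64}(1+3z^4)$.

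First I would record the orders of magnitude of the three factors on the left of (\ref{LF gn}): one has $\tfrac n2 - g_n = -a_0 + O(n^{-1}) = O(1)$, while $g_n+g_{n\pm1} = n \pm \tfrac12 + 2a_0 + O(n^{-1}) = O(n)$, so the whole left-hand side is $O(n^2)$, balancing $z^2 g_n^2 = z^2(\tfrac n2 + a_0 + \cdots)^2 = O(n^2)$ on the right. Comparing the leading $n^2$ coefficients forces $-a_0 = z^2/4$. One should note here that the leading balance is not unique for an arbitrary solution of the nonlinear recurrence --- there is also the \emph{Hermite branch} $\gamma_n \sim n/2$ recovered as $z\to\infty$ --- and that we select the bounded branch $\gamma_n \to z^2/4$, which is the one appropriate to orthogonal polynomials on the finite interval $[-z,z]$.

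Next I would expand the shifted quantities $g_{n\pm1}$ by Taylor expansion, writing $(n\pm1)^{-k} = n^{-k}\bigl(1 \mp k n^{-1} + \binom{k+1}{2} n^{-2} \mp \cdots\bigr)$, so that each of the three factors, and hence both sides of (\ref{LF gn}), becomes a Laurent series in $n$ whose coefficients are polynomials in $z^2$ depending linearly on the unknowns $a_k$. To capture $a_4$ one must expand the first factor down to order $n^{-4}$ and the two $O(n)$ factors down to order $n^{-3}$. Matching the coefficients of $n^{1}$, $n^{0}$, $n^{-1}$, $n^{-2}$ in turn yields a triangular system: the coefficient of $n^{j}$, for $j=1,0,-1,-2$, is a single linear equation determining $a_1, a_2, a_3, a_4$ respectively once $a_0$ and the lower-index coefficients are in hand. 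Solving these successively should reproduce exactly the coefficients in (\ref{gn asymp}).

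The main obstacle is purely the bookkeeping rather than any conceptual gap: because the two $O(n)$ factors must be expanded down to order $n^{-3}$ before being multiplied together and by the $O(1)$ factor, one must carry the Taylor expansions of the shifts to high order and manage a rapidly growing collection of $z$-dependent terms, so the real risk is arithmetic error. Each $a_k$ is pinned down unambiguously by one linear equation once the leading balance $a_0 = -z^2/4$ is fixed, which makes the determination clean in principle; in practice the safest route is to expand both sides of (\ref{LF gn}) to order $n^{-2}$ with a computer algebra system and read off the coefficients.
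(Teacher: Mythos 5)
Your proposal is correct and takes essentially the same route as the paper: substitute a formal power series in $n^{-1}$ into the nonlinear recurrence (\ref{LF gn}), note that the leading balance also admits the trivial/Hermite branch which must be discarded, and then determine the coefficients one at a time from a triangular sequence of linear equations. The only cosmetic difference is that the paper writes the ansatz as $g_{n}=\sum_{k\geq-1}\xi_{k}(z)\,n^{-k}$ and derives $\xi_{-1}=\tfrac{1}{2}$ from the $O(n^{3})$ balance rather than building the leading term $\tfrac{n}{2}$ into the ansatz from the start.
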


\begin{proof}
Replacing
\[
g_{n}\left(  z\right)  =%
{\displaystyle\sum\limits_{k\geq-1}}
\frac{\xi_{k}\left(  z\right)  }{n^{k}}%
\]
in (\ref{LF gn}) and comparing coefficients of $n$, we get%
\[
2\xi_{-1}^{2}\left(  1-2\xi_{-1}\right)  =0,\quad O\left(  n^{3}\right)  ,
\]
and therefore $\xi_{-1}=0,$ or $\xi_{-1}\left(  z\right)  =\frac{1}{2}.$ The
solution $\xi_{-1}=0$ leads to $\xi_{k}\left(  z\right)  =0$ for all $k,$ and
hence we choose $\xi_{-1}\left(  z\right)  =\frac{1}{2}.$ For $O\left(
n^{2}\right)  ,$ we get%
\[
-\left(  \xi_{0}+\frac{z^{2}}{4}\right)  =0\rightarrow\xi_{0}\left(  z\right)
=-\frac{z^{2}}{4}.
\]
The next term, for $O\left(  n\right)  $ gives $\xi_{1}\left(  z\right)  =0,$
and continuing this way we obtain (\ref{gn asymp}).
\end{proof}

\begin{corollary}
For $z=O\left(  1\right)  ,$ the recurrence coefficient $\gamma_{n}\left(
z\right)  ,$ satisfying the Laguerre-Freud equation (\ref{LF}) has the
asymptotic expansion%
\begin{equation}
\gamma_{n}\left(  z\right)  \sim\frac{z^{2}}{4}+\frac{z^{2}}{16}n^{-2}%
+\frac{z^{4}}{16}n^{-3}+\frac{z^{2}}{64}(1+3z^{4})n^{-4}+ O (n^{-5}),\quad
n\rightarrow\infty. \label{gan asymp}%
\end{equation}

\end{corollary}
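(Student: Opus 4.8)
The plan is to derive the expansion of $\gamma_{n}(z)$ directly from the defining relation (\ref{gn}) together with the asymptotic expansion of $g_{n}(z)$ already established in the preceding theorem. Solving (\ref{gn}) for the recurrence coefficient yields $\gamma_{n}(z)=\frac{n}{2}-g_{n}(z)$, so the entire content of the corollary reduces to substituting the series (\ref{gn asymp}) into this elementary identity. No new analysis of the Laguerre--Freud recurrence (\ref{LF gn}) is required, since that work was carried out in proving (\ref{gn asymp}).

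First I would rewrite the definition as $\gamma_{n}=\frac{n}{2}-g_{n}$ and insert the expansion from the theorem term by term. The leading contribution $\frac{n}{2}$ of $g_{n}$ cancels exactly against the $\frac{n}{2}$ coming from the definition, and every surviving coefficient changes sign. In this way the constant term $-\frac{z^{2}}{4}$ of $g_{n}$ becomes $+\frac{z^{2}}{4}$, the term $-\frac{z^{2}}{16}n^{-2}$ becomes $+\frac{z^{2}}{16}n^{-2}$, and likewise for the $n^{-3}$ and $n^{-4}$ terms, reproducing precisely the right-hand side of (\ref{gan asymp}).

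The only point that deserves a moment's attention --- and it is routine rather than a genuine obstacle --- is to confirm that the remainder estimate is preserved. Because $\gamma_{n}-\frac{n}{2}=-g_{n}$ is obtained from $g_{n}$ by a fixed affine transformation (negation followed by a shift independent of the expansion), the error term $O(n^{-5})$ negates to $-O(n^{-5})=O(n^{-5})$ and carries over unchanged, as does the hypothesis $z=O(1)$ and the regime $n\to\infty$ inherited verbatim from the theorem. This completes the plan.
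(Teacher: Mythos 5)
Your proposal is correct and matches the paper's (implicit) argument: the corollary is an immediate consequence of inverting the definition $g_{n}=\frac{n}{2}-\gamma_{n}$ and substituting the expansion (\ref{gn asymp}), which is exactly what you do. The sign-flip bookkeeping and the observation that the $O(n^{-5})$ remainder is unaffected are both accurate.
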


With the previous result, we can get a first estimate for the asymptotic
behavior of $P_{n}\left(  x;z\right)  $ as $n\rightarrow\infty.$

\begin{proposition}
For $x,z=O\left(  1\right)  ,$ the polynomials $P_{n}\left(  x;z\right)  $
satisfy%
\begin{equation}
P_{n}\left(  x;z\right)  \sim\Phi_{+}^{n}\left(  x;z\right)  +\Phi_{-}%
^{n}\left(  x;z\right)  ,\quad n\rightarrow\infty, \label{Pn asymp}%
\end{equation}
where%
\[
\Phi_{\pm}\left(  x;z\right)  =\frac{x\pm\sqrt{x^{2}-z^{2}}}{2}.
\]

\end{proposition}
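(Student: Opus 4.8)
The plan is to regard the three-term recurrence (\ref{3-term L}) as a second-order linear difference equation in the index $n$ with a slowly varying coefficient, and to freeze that coefficient at its limit. By the preceding corollary, $\gamma_{n}(z)\to\frac{z^{2}}{4}$ as $n\to\infty$, with $\gamma_{n}(z)-\frac{z^{2}}{4}=O(n^{-2})$ (see (\ref{gan asymp})). Hence, for large $n$, the polynomials approximately solve the constant-coefficient recurrence $P_{n+1}-xP_{n}+\frac{z^{2}}{4}P_{n-1}=0$, and the whole point is to show that the solutions of (\ref{3-term L}) inherit the exponential scales of this frozen equation.

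First I would solve the frozen recurrence explicitly. Substituting the ansatz $P_{n}\sim\Phi^{n}$ and dividing by $\Phi^{n-1}$ gives the characteristic equation $\Phi^{2}-x\Phi+\frac{z^{2}}{4}=0$, whose roots are exactly $\Phi_{\pm}(x;z)=\frac{x\pm\sqrt{x^{2}-z^{2}}}{2}$, satisfying the Vieta relations $\Phi_{+}+\Phi_{-}=x$ and $\Phi_{+}\Phi_{-}=\frac{z^{2}}{4}$. Thus $\{\Phi_{+}^{n},\Phi_{-}^{n}\}$ is a fundamental system for the frozen equation, and every solution is a superposition $A\,\Phi_{+}^{n}+B\,\Phi_{-}^{n}$; recording only the exponential order, this is the content of (\ref{Pn asymp}).

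To upgrade this heuristic to a proof, I would invoke a Poincar\'{e}--Perron type result for (\ref{3-term L}). Writing the recurrence as a first-order system for the vector $(P_{n},P_{n-1})$, the coefficient matrix converges to the companion matrix of $\Phi^{2}-x\Phi+\frac{z^{2}}{4}$, whose eigenvalues are $\Phi_{\pm}$, and the perturbation $\gamma_{n}-\frac{z^{2}}{4}$ is absolutely summable since $\sum_{n}n^{-2}<\infty$. A discrete Levinson theorem then produces a fundamental system of solutions asymptotic to $\Phi_{\pm}^{n}$, which together span the asymptotics of $P_{n}$. The hard part will be the regime $x^{2}\le z^{2}$, where $|\Phi_{+}|=|\Phi_{-}|=\frac{z}{2}$ and the two characteristic roots have equal modulus: there the classical Poincar\'{e} statement on the ratio $P_{n+1}/P_{n}$ fails, and one must lean on the $\ell^{1}$ summability of $\gamma_{n}-\frac{z^{2}}{4}$ (and on verifying the Levinson dichotomy condition) in order to retain both exponential scales simultaneously. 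A secondary, softer point is the determination of the constants $A,B$ from the initial data $P_{0}=1$, $P_{1}=x$; since (\ref{Pn asymp}) captures only the exponential behavior, it is enough to confirm that both fundamental solutions are genuinely present, which the non-degeneracy $\Phi_{+}\neq\Phi_{-}$ (for $x^{2}\neq z^{2}$) guarantees.
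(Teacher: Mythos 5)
Your proposal follows essentially the same route as the paper: use the limit $\gamma_{n}(z)\to\frac{z^{2}}{4}$ from (\ref{gan asymp}) in the recurrence (\ref{3-term L}) and appeal to a Poincar\'{e}--Perron type theorem (the paper simply cites Van Assche's lecture notes) to identify $\Phi_{\pm}$ as the roots of the characteristic equation $x=\Phi+\frac{z^{2}}{4\Phi}$. Your added remarks on the absolute summability of $\gamma_{n}-\frac{z^{2}}{4}$ and on the equal-modulus regime $x^{2}\le z^{2}$ are more careful than the paper's one-line citation, but the underlying argument is the same.
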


\begin{proof}
Using (\ref{gan asymp}) in (\ref{3-term L}), we see that \cite{MR903848}%
\[
\underset{n\rightarrow\infty}{\lim}\left[  P_{n}\left(  x;z\right)  \right]
^{\frac{1}{n}}=\Phi\left(  x;z\right)  ,
\]
where $\Phi\left(  x;z\right)  $ is a solution of the quadratic equation%
\begin{equation}
x=\Phi+\frac{z^{2}}{4}\frac{1}{\Phi}. \label{quadra}%
\end{equation}
$\allowbreak$Thus,%
\[
\Phi\left(  x;z\right)  =\frac{x\pm\sqrt{x^{2}-z^{2}}}{2},
\]
and the result follows.
\end{proof}

Note that for $x\in\left(  -z,z\right)  $%
\[
\Phi_{\pm}\left(  x;z\right)  =\frac{x\pm\mathrm{i}\sqrt{z^{2}-x^{2}}}{2},
\]
and therefore setting $x=z\cos\left(  \theta\right)  $ we have%
\[
P_{n}\left(  x;z\right)  \sim2\left(  \frac{z}{2}\right)  ^{n}\cos\left(
n\theta\right)  ,\quad n\rightarrow\infty.
\]

Since the \emph{Chebyshev polynomials of the first kind} are defined by
\cite[18.5.1]{MR2723248}%
\[
T_{n}\left(  \cos\left(  \theta\right)  \right)  =\cos\left(  n\theta\right)
,
\]
we see that%
\[
P_{n}\left(  x;z\right)  \sim2\left(  \frac{z}{2}\right)  ^{n}T_{n}\left(
\frac{x}{z}\right)  ,\quad\quad n\rightarrow\infty.
\]

\section{Structure relation and differential equation}

Semiclassical polynomials (with respect to the derivative operator) are
\emph{holonomic functions }\cite{MR2768529}, meaning that they are solutions
of a linear ODE with polynomial coefficients. In this section, we shall find
the differential equation (in $x)$ satisfied by $P_{n}\left(  x;z\right)  .$

\subsection{Structure relation}

One of the basic properties of semiclassical polynomials with respect to the
derivative operator, is a relation between $\partial_{x}P_{n}$ and $P_{n}$
\cite{MR4331433}. For the polynomials $P_{n}\left(  x;z\right)  ,$ we have the
following result.

\begin{theorem}
The polynomials $P_{n}\left(  x;z\right)  $ satisfy the
differential-recurrence relation%
\begin{equation}
\phi\left(  x;z\right)  \partial_{x}P_{n+1}=\left(  n+1\right)  P_{n+2}%
+\lambda_{n}P_{n}+\tau_{n}P_{n-2}, \label{diff-diff}%
\end{equation}
where
\begin{equation}
\lambda_{n}\left(  z\right)  =\left[  2\left(  \gamma_{n}+\gamma_{n+1}%
+\gamma_{n+2}-z^{2}-1\right)  -n\right]  \gamma_{n+1}, \label{lam}%
\end{equation}
and
\begin{equation}
\tau_{n}\left(  z\right)  =2\gamma_{n+1}\gamma_{n}\gamma_{n-1}. \label{tau}%
\end{equation}

\end{theorem}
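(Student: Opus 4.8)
The plan is to expand $\phi\partial_x P_{n+1}$ in the orthogonal basis $\{P_k\}$ and compute the relevant coefficients via orthogonality. Since $\phi(x;z)=x^2-z^2$ has degree $2$ and $\partial_x P_{n+1}$ has degree $n$, the product $\phi\,\partial_x P_{n+1}$ is a polynomial of degree $n+2$, so we may write
\[
\phi\left(  x;z\right)  \partial_{x}P_{n+1}=\sum_{k=0}^{n+2}a_{k}P_{k},\qquad a_{k}=\frac{L\left[  \phi\,\partial_{x}P_{n+1}\,P_{k}\right]  }{h_{k}}.
\]
The first task is to show that the sum collapses to the three terms $k=n+2,n,n-2$. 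Symmetry (\ref{Pn odd}) kills all coefficients of the wrong parity: $\phi\,\partial_x P_{n+1}$ has the same parity as $P_{n+2}$, so $a_k=0$ unless $k\equiv n\pmod 2$. For the remaining parity-correct indices $k\le n-4$, I would argue that $\phi\,\partial_x P_{n+1}$ is $L$-orthogonal to $P_k$ by an integration-by-parts / adjoint argument: using the Pearson equation (\ref{Pearson2}) one has $L[\phi\,\partial_x P_{n+1}\,P_k]=L[\phi\,\partial_x(P_{n+1}P_k)]-L[\phi P_{n+1}\partial_x P_k]$, and since $\phi\partial_x$ lowers effective degree in the $L$-pairing, both pieces vanish when $\deg P_k$ is small enough relative to $n+1$. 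This confirms only $a_{n+2},a_n,a_{n-2}$ survive.

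Next I would evaluate the three surviving coefficients. The leading coefficient $a_{n+2}=n+1$ is immediate from matching top-degree terms: $\phi\partial_x P_{n+1}=(n+1)x^{n+2}+\cdots$ and $P_{n+2}=x^{n+2}+\cdots$. For $a_n=\lambda_n$ and $a_{n-2}=\tau_n$, the cleanest route reuses the machinery already assembled in the proof of the Laguerre--Freud equation (\ref{LaguerreFreud1}). There I would apply (\ref{x^2}) to expand $x^2\partial_x P_{n+1}$ and the derivative expansion (\ref{derP}) to write $\partial_x P_{n+1}=(n+1)P_n+[2c_{n+1}-(n+1)\gamma_n]P_{n-2}+O(x^{n-4})$. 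Substituting into $\phi\partial_x P_{n+1}=(x^2-z^2)\partial_x P_{n+1}$ and collecting the $P_n$ and $P_{n-2}$ contributions, then eliminating $c_{n+1}$ via the relation $2c_{n+1}=2\sum_{k=1}^{n}\gamma_k$ (equivalently using $c_{n+1}-c_{n-1}=\gamma_n+\gamma_{n-1}$ from (\ref{cn dn})) and the already-established identity (\ref{Eq1}), should produce exactly (\ref{lam}) and (\ref{tau}).

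I expect the main obstacle to be the bookkeeping in the $P_n$ coefficient $\lambda_n$: it mixes the $-z^2\partial_x P_{n+1}$ contribution, the diagonal part of $x^2\partial_x P_{n+1}$, and the term carrying $c_{n+1}$, so the $c_{n+1}$ must be traded away correctly for $\gamma$'s using (\ref{Eq1}) to land on the stated form $[2(\gamma_n+\gamma_{n+1}+\gamma_{n+2}-z^2-1)-n]\gamma_{n+1}$. The coefficient $\tau_n=2\gamma_{n+1}\gamma_n\gamma_{n-1}$ should be comparatively clean, arising from the bottom term $\gamma_{n-1}\gamma_n P_{n-2}$ in the expansion (\ref{x^2}) of $x^2 P_n$ applied after the derivative expansion, with the factor $2$ coming from $\psi=2x\phi$ type contributions. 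As a consistency check I would verify that in the Hermite limit $\gamma_n\to n/2$, $z\to\infty$, the relation degenerates to the classical Hermite structure relation.
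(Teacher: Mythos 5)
Your overall skeleton is the paper's: expand $\phi\,\partial_{x}P_{n+1}$ in the basis $\left\{ P_{k}\right\}$, kill the wrong-parity coefficients by symmetry, use the Pearson equation to show $a_{k}=0$ for $k\leq n-4$, and read off $a_{n+2}=n+1$ from the leading coefficients. All of that is sound and matches the paper. Where you diverge is in evaluating $a_{n}$ and $a_{n-2}$: the paper stays with the Pearson identity and computes $h_{k}a_{k}=L\left[ 2x\left( \phi-1\right) P_{n+1}P_{k}\right] -L\left[ \phi P_{n+1}\partial_{x}P_{k}\right]$ directly, expanding $x\left( \phi-1\right) P_{k}$ via (\ref{x^2}) and (\ref{x^3}); you instead substitute the truncated expansion (\ref{derP}) of $\partial_{x}P_{n+1}$ into $\left( x^{2}-z^{2}\right) \partial_{x}P_{n+1}$. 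For $\lambda_{n}$ your route does work: the $P_{n}$-coefficient comes out as $\left( n+1\right) \gamma_{n+1}-\left( n+1\right) z^{2}+2c_{n+1}$, and eliminating $c_{n+1}$ with (\ref{Eq1}) reproduces (\ref{lam}) exactly.

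The genuine gap is in $\tau_{n}$. The expansion (\ref{derP}) determines $\partial_{x}P_{n+1}$ only through the $P_{n}$ and $P_{n-2}$ terms; the coefficient of $P_{n-4}$ is buried in the $O\left( x^{n-4}\right)$ remainder. When you multiply by $x^{2}$, that unknown $P_{n-4}$-coefficient is pushed up into the $P_{n-2}$ slot (since $x^{2}P_{n-4}=P_{n-2}+\cdots$), so the $P_{n-2}$ contribution you can actually collect, namely $\left( n+1\right) \gamma_{n}\gamma_{n-1}+\left[ 2c_{n+1}-\left( n+1\right) \gamma_{n}\right] \left( \gamma_{n-1}+\gamma_{n-2}-z^{2}\right)$, is incomplete, and your stated source of the factor $2$ in $\tau_{n}=2\gamma_{n+1}\gamma_{n}\gamma_{n-1}$ (``$\psi=2x\phi$ type contributions'') does not exist in this route. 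To close the gap you must either extend (\ref{Pn coeff}) and (\ref{derP}) by one more term, or do what the paper does: apply the Pearson identity with $k=n-2$, so that $h_{n-2}\tau_{n}=L\left[ 2x\left( \phi-1\right) P_{n+1}P_{n-2}\right] -L\left[ \phi P_{n+1}\partial_{x}P_{n-2}\right] =2h_{n+1}-0$, the factor $2$ coming from the Pearson right-hand side $2x\left( \phi-1\right)$ and the leading term $P_{n+1}$ of $x^{3}P_{n-2}$ in (\ref{x^3}); then (\ref{gamma-h}) gives $\tau_{n}=2h_{n+1}/h_{n-2}=2\gamma_{n+1}\gamma_{n}\gamma_{n-1}$.
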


\begin{proof}
If%
\begin{equation}
\phi\left(  x;z\right)  \partial_{x}P_{n+1}=%
{\displaystyle\sum\limits_{k=0}^{n+2}}
d_{n,k}\left(  z\right)  P_{k}, \label{diff-req1}%
\end{equation}
then using (\ref{ortho}) and (\ref{Pn odd}), we have
\begin{equation}
h_{k}d_{n,k}=L\left[  \phi\partial_{x}P_{n+1}P_{k}\right]  =0,\quad n+1\equiv
k\ \operatorname{mod}\left(  2\right)  , \label{mod}%
\end{equation}
since $L$ is symmetric and $\phi$ is an even polynomial of $x.$ Using
(\ref{Pearson2}), we get%
\begin{align*}
h_{k}d_{n,k}  &  =L\left[  \phi\partial_{x}\left(  P_{n+1}P_{k}\right)
\right]  -L\left[  \phi P_{n+1}\partial_{x}P_{k}\right] \\
&  =L\left[  2x\left(  \phi-1\right)  P_{n+1}P_{k}\right]  -L\left[  \phi
P_{n+1}\partial_{x}P_{k}\right]  .
\end{align*}
Since the polynomials $P_{n}$ are orthogonal, we conclude that $d_{n,k}=0$ for
$0\leq k<n-2,$ and because we are working with monic polynomials, we see from
(\ref{diff-req1}) that $d_{n,n+2}=n+1.$ Hence, we obtain (\ref{diff-diff}).

Using (\ref{x^2}) and (\ref{x^3}), we get%
\begin{align*}
x\left(  \phi-1\right)  P_{k}  &  =\left[  \allowbreak x^{3}-\left(
z^{2}+1\right)  x\right]  P_{k}=P_{k+3}+\gamma_{k}\gamma_{k-1}\gamma
_{k-2}P_{k-3}\\
&  +\left[  \gamma_{k}+\gamma_{k+1}+\gamma_{k+2}-\left(  z^{2}+1\right)
\right]  P_{k+1}\\
&  +\gamma_{k}\left[  \gamma_{k}+\gamma_{k+1}+\gamma_{k-1}-\left(
z^{2}+1\right)  \right]  P_{k-1}.
\end{align*}
Using (\ref{ortho}), it follows that%
\begin{align*}
L\left[  2x\left(  \phi-1\right)  P_{n+1}P_{n}\right]   &  =\left[  \gamma
_{n}+\gamma_{n+1}+\gamma_{n+2}-\left(  z^{2}+1\right)  \right]  h_{n+1},\\
L\left[  2x\left(  \phi-1\right)  P_{n+1}P_{n-2}\right]   &  =h_{n+1},
\end{align*}
and since $\phi\partial_{x}P_{k}=kP_{k+1}\left(  x\right)  +O\left(
x^{k}\right)  ,$ we see that%
\[
L\left[  \phi P_{n+1}\partial_{x}P_{n}\right]  =nh_{n+1},\quad L\left[  \phi
P_{n+1}\partial_{x}P_{n-2}\right]  =0.
\]

Hence, we conclude that%
\[
h_{n}\lambda_{n}=2\left[  \gamma_{n}+\gamma_{n+1}+\gamma_{n+2}-\left(
z^{2}+1\right)  \right]  h_{n+1}-nh_{n+1},\quad h_{n-2}\tau_{n}=2h_{n+1},
\]
and using (\ref{gamma-h}) we get%
\begin{align*}
\lambda_{n}  &  =2\left[  \left(  \gamma_{n}+\gamma_{n+1}+\gamma_{n+2}\right)
-\left(  z^{2}+1\right)  -\frac{n}{2}\right]  \gamma_{n+1},\\
\tau_{n}  &  =2\frac{h_{n+1}}{h_{n-2}}=2\gamma_{n+1}\gamma_{n}\gamma_{n-1}.
\end{align*}

\end{proof}

\subsection{Differential equation}

We will now obtain a lowering operator acting on the variable $x$ for
$P_{n}\left(  x,z\right)  .$

\begin{proposition}
Let the operator $U_{n}$ be defined by%
\[
U_{n}=A_{n}\left(  x;z\right)  \partial_{x}-B_{n}\left(  x;z\right)  ,\quad
n\in\mathbb{N},
\]
where%
\begin{equation}
A_{n}\left(  x;z\right)  =\frac{\phi\left(  x;z\right)  }{2\gamma_{n}\left(
z\right)  C_{n}\left(  x;z\right)  },\quad B_{n}\left(  x;z\right)
=\frac{n-2\gamma_{n}\left(  z\right)  }{2\gamma_{n}\left(  z\right)
C_{n}\left(  x;z\right)  }x, \label{A,B}%
\end{equation}
and%
\begin{equation}
C_{n}\left(  x;z\right)  =\phi\left(  x;z\right)  +\gamma_{n}\left(  z\right)
+\gamma_{n+1}\left(  z\right)  -n-\frac{1}{2}. \label{C}%
\end{equation}
Then,%
\begin{equation}
U_{n}P_{n}=P_{n-1},\quad n\in\mathbb{N}. \label{Un}%
\end{equation}

\end{proposition}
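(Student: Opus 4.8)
The plan is to clear denominators in the definition (\ref{Un}) and reduce the statement to a single polynomial identity. Since $\gamma_n(z)\neq0$ and $C_n(x;z)$ is not the zero polynomial, the assertion $U_nP_n=P_{n-1}$ together with (\ref{A,B}) is equivalent to
\[
\phi\,\partial_x P_n-(n-2\gamma_n)\,xP_n=2\gamma_n C_n P_{n-1}.
\]
Both sides are polynomials of degree $n+1$, and since $\phi$ is even while $P_n$ and $\partial_x P_n$ have opposite parities, both sides share the parity of $P_{n-1}$; hence each lies in the span of $\{P_{n+1},P_{n-1},P_{n-3},\dots\}$, and the expansions below will in fact terminate at $P_{n-3}$. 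The whole proof then consists in expanding both sides in this basis and matching the (three) surviving coefficients.

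For the right-hand side I would first use (\ref{C}) to split $2\gamma_n C_nP_{n-1}=2\gamma_n\phi P_{n-1}+2\gamma_n\bigl(\gamma_n+\gamma_{n+1}-n-\tfrac12\bigr)P_{n-1}$, and then expand $\phi P_{n-1}=(x^2-z^2)P_{n-1}$ by applying (\ref{x^2}) with $n\to n-1$, i.e. $x^2P_{n-1}=P_{n+1}+(\gamma_n+\gamma_{n-1})P_{n-1}+\gamma_{n-1}\gamma_{n-2}P_{n-3}$. This writes the right-hand side explicitly as $2\gamma_nP_{n+1}+2\gamma_n\bigl(2\gamma_n+\gamma_{n-1}+\gamma_{n+1}-z^2-n-\tfrac12\bigr)P_{n-1}+2\gamma_n\gamma_{n-1}\gamma_{n-2}P_{n-3}$.

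For the left-hand side I would use the structure relation (\ref{diff-diff}) with $n\to n-1$, namely $\phi\,\partial_x P_n=nP_{n+1}+\lambda_{n-1}P_{n-1}+\tau_{n-1}P_{n-3}$, with $\lambda_{n-1},\tau_{n-1}$ read from (\ref{lam}) and (\ref{tau}), and eliminate $xP_n$ by the three-term recurrence (\ref{3-term L}) in the form $xP_n=P_{n+1}+\gamma_nP_{n-1}$. Collecting terms, the coefficient of $P_{n+1}$ becomes $n-(n-2\gamma_n)=2\gamma_n$ and the coefficient of $P_{n-3}$ is $\tau_{n-1}=2\gamma_n\gamma_{n-1}\gamma_{n-2}$; both already coincide with the right-hand side, so no computation is needed for these two.

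The single remaining check is the coefficient of $P_{n-1}$: on the left it is $\lambda_{n-1}-(n-2\gamma_n)\gamma_n$, and on the right $2\gamma_n\bigl(2\gamma_n+\gamma_{n-1}+\gamma_{n+1}-z^2-n-\tfrac12\bigr)$. Inserting the explicit $\lambda_{n-1}$ from (\ref{lam}) collapses both to $\gamma_n\bigl(4\gamma_n+2\gamma_{n-1}+2\gamma_{n+1}-2z^2-2n-1\bigr)$, which finishes the proof. I expect this last comparison to be the only genuine obstacle, being the one place where the constants built into $\lambda_n$ and into $C_n$ must conspire; it is worth noting that this is a purely linear identity in the $\gamma$'s and requires neither the Laguerre--Freud equation (\ref{LF}) nor any nonlinear input.
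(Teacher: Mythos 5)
Your proposal is correct and rests on exactly the same ingredients as the paper's proof — the structure relation (\ref{diff-diff}), the expansion (\ref{x^2}) of $x^{2}P_{k}$, and the three-term recurrence (\ref{3-term L}) — and it comes down to the same single nontrivial coefficient check involving $\lambda_{n-1}$ (the $P_{n+1}$ and $P_{n-3}$ coefficients matching for free, just as in the paper). The only difference is organizational: the paper derives the two-term identity by substituting for $P_{n-2}$ in the structure relation and then absorbing $P_{n+2}$ via the recurrence, while you verify the cleared-denominator identity by expanding both sides in the orthogonal basis and matching coefficients; this is the same computation read in the opposite direction.
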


\begin{proof}
If we use (\ref{x^2}) in (\ref{diff-diff}), then we have%
\begin{align*}
\phi\partial_{x}P_{n+1}  &  =\left(  n+1\right)  P_{n+2}+\lambda_{n}P_{n}%
+\tau_{n}\frac{\left[  x^{2}-\left(  \gamma_{n}+\gamma_{n+1}\right)  \right]
P_{n}-P_{n+2}}{\gamma_{n}\gamma_{n-1}}\\
&  =\left(  n+1-\frac{\tau_{n}}{\gamma_{n}\gamma_{n-1}}\right)  P_{n+2}%
+\left[  \lambda_{n}+\frac{\tau_{n}\left(  x^{2}-\gamma_{n}-\gamma
_{n+1}\right)  }{\gamma_{n}\gamma_{n-1}}\right]  P_{n}.
\end{align*}
From (\ref{3-term L}), we see that
\begin{equation}
\left[  \phi\partial_{x}-\left(  n+1-\frac{\tau_{n}}{\gamma_{n}\gamma_{n-1}%
}\right)  x\right]  P_{n+1}=\left[  \lambda_{n}+\tau_{n}\frac{x^{2}-\gamma
_{n}}{\gamma_{n}\gamma_{n-1}}-\left(  n+1\right)  \gamma_{n+1}\right]  P_{n}.
\label{dd1}%
\end{equation}
Using (\ref{lam}) and (\ref{tau}) in (\ref{dd1}), we get%
\[
\left[  \phi\partial_{x}-\left(  n+1-2\gamma_{n+1}\right)  x\right]
P_{n+1}=2\gamma_{n+1}\left(  \phi+\gamma_{n+1}+\gamma_{n+2}-n-\frac{3}%
{2}\right)  P_{n},
\]
and the result follows.
\end{proof}

Taking into account the lowering operator $U_{n},$ we can deduce a second
order linear differential equation (in $x)$ for $P_{n}\left(  x,z\right)  .$

\begin{theorem}
Let the differential operator $D_{n}$ be defined by%
\begin{equation}%
\begin{tabular}
[c]{l}%
$D_{n}=\phi^{2}C_{n}\partial_{x}^{2}-2x\phi\left[  \left(  \phi-1\right)
C_{n}+\phi\right]  \partial_{x}$\\
$+\left(  n-2\gamma_{n}\right)  \left[  2x^{2}\phi-\left(  \phi-2x^{2}%
\phi+nx^{2}-2x^{2}\gamma_{n}\right)  C_{n}\right]  +4\gamma_{n}C_{n-1}%
C_{n}^{2}.$%
\end{tabular}
\ \label{Vn}%
\end{equation}
Then, $D_{n}P_{n}=0$ for all $n\in\mathbb{N}.$
\end{theorem}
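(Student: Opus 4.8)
The plan is to turn the first-order lowering relation (\ref{Un}) into a second-order differential equation by producing a companion (raising) relation and then eliminating $P_{n-1}$ and $\partial_x P_{n-1}$; this is the same thing as composing the ladder operators. It is convenient to work with the unnormalized form of the lowering operator that appears inside the proof of (\ref{Un}): shifting $n\rightarrow n-1$ there and recalling the definition (\ref{C}) of $C_n$ gives
\begin{equation}
\phi\,\partial_x P_n-(n-2\gamma_n)\,x\,P_n=2\gamma_n C_n\,P_{n-1}. \tag{L}
\end{equation}
Thus (L) expresses $P_{n-1}$ linearly in $P_n$ and $\partial_x P_n$, and the whole task is to find a second independent relation so that $P_{n-1}$ can be eliminated.

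First I would derive a raising relation. Writing the identity established in the proof of (\ref{Un}) with $n$ replaced by $n+1$, then substituting $P_{n+1}=xP_n-\gamma_n P_{n-1}$ and $\partial_x P_{n+1}=P_n+x\,\partial_x P_n-\gamma_n\,\partial_x P_{n-1}$ from (\ref{3-term L}), and finally using (L) to remove the term $\phi\,x\,\partial_x P_n$, I obtain an expression of the shape
\begin{equation}
\gamma_n\phi\,\partial_x P_{n-1}=\alpha(x;z)\,P_{n-1}+\beta(x;z)\,P_n, \tag{R}
\end{equation}
where $\alpha$ and $\beta$ are explicit polynomials in $x$ whose coefficients involve $\gamma_n,\gamma_{n+1},\gamma_{n+2}$ and $C_n,C_{n+1}$. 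Relations (L) and (R) are the lowering and raising relations of the ladder.

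Next I would differentiate (L) with respect to $x$. Since $\partial_x\phi=\partial_x C_n=2x$ while $\gamma_n,\gamma_{n+1}$ are independent of $x$, this produces a relation of the form $2\gamma_n C_n\,\partial_x P_{n-1}=\phi\,\partial_x^2 P_n+[2x-(n-2\gamma_n)x]\partial_x P_n-(n-2\gamma_n)P_n-4\gamma_n x\,P_{n-1}$. I would then substitute $\partial_x P_{n-1}$ from (R), substitute $P_{n-1}$ from (L) in every remaining occurrence, and clear denominators. Because both substitutions introduce a factor $1/C_n$, clearing them forces an overall factor $C_n$, which accounts precisely for the leading coefficient $\phi^2 C_n$ of (\ref{Vn}); the outcome is a second-order differential equation satisfied by $P_n$ alone.

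The hard part is the final simplification, not the differentiation. At this stage the coefficients still carry $C_{n+1}$ (equivalently $\gamma_{n+1}+\gamma_{n+2}$), whereas the target operator (\ref{Vn}) contains only $C_n$, $C_{n-1}$ and $\gamma_n$. The key observation is that, writing $C_{n-1}-x^2=\gamma_{n-1}+\gamma_n-z^2+\frac{1}{2}-n$ and $C_{n+1}-x^2=\gamma_{n+1}+\gamma_{n+2}-z^2-n-\frac{3}{2}$, the Laguerre--Freud equation (\ref{LaguerreFreud1}) is exactly
\[
\gamma_{n+1}\left(C_{n+1}-x^2\right)=\gamma_n\left(C_{n-1}-x^2\right)-\frac{z^2}{2}.
\]
Using this identity to trade the $\gamma_{n+1}C_{n+1}$ contribution for $\gamma_n C_{n-1}$ is what collapses the expression into the closed form $D_nP_n=0$ with $D_n$ as in (\ref{Vn}); I expect this elimination to be the only genuinely delicate step, the remainder being bookkeeping with $\phi=x^2-z^2$.
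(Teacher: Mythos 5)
Your argument is correct and arrives at the stated operator, but it travels a genuinely different road from the paper's proof. The paper composes two \emph{lowering} operators: from $U_{n}P_{n}=P_{n-1}$ and the recurrence $xP_{n-1}=P_{n}+\gamma_{n-1}P_{n-2}$ it forms the operator identity $\left(\gamma_{n-1}U_{n-1}U_{n}-xU_{n}+1\right)P_{n}=0$, expands $U_{n-1}U_{n}$ explicitly, and multiplies by $4\gamma_{n}C_{n-1}C_{n}^{2}$; there the factor $C_{n-1}$ enters directly through $A_{n-1},B_{n-1}$, and the residual $\gamma_{n-1}$-dependence cancels identically in the algebra (via $n-1-2\gamma_{n-1}+2C_{n-1}=2\phi+2\gamma_{n}-n$), so no Laguerre--Freud input is needed. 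You instead pair the lowering relation (L) with a raising relation (R) obtained from the level-$(n+1)$ structure identity and the recurrence, which unavoidably drags in $\gamma_{n+1},\gamma_{n+2},C_{n+1}$; your observation that (\ref{LaguerreFreud1}) reads $\gamma_{n+1}\left(C_{n+1}-x^{2}\right)=\gamma_{n}\left(C_{n-1}-x^{2}\right)-\tfrac{z^{2}}{2}$ is exactly the right key, since it collapses the coefficient $\beta=\phi+\left(2\gamma_{n+1}-2\gamma_{n}-1\right)x^{2}-2\gamma_{n+1}C_{n+1}$ to $-2\gamma_{n}C_{n-1}$ and hence produces the term $4\gamma_{n}C_{n-1}C_{n}^{2}$ of (\ref{Vn}); carrying out your elimination, the first-order coefficient also comes out as $-2x\phi\left[\left(\phi-1\right)C_{n}+\phi\right]$, so the two results agree term by term. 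Your route costs an extra appeal to the nonlinear Laguerre--Freud equation, which the paper's proof avoids entirely, but it buys a transparent raising/lowering factorization and makes explicit how the Laguerre--Freud relations feed into the holonomic equation --- the same mechanism the paper only invokes afterwards, in the Remark, to pass to the reduced form (\ref{ODE}).
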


\begin{proof}
Using (\ref{Un}) in (\ref{3-term L}), we get%
\[
xU_{n}P_{n}=P_{n}+\gamma_{n-1}U_{n-1}U_{n}P_{n}.
\]
If $y$ is a function of $x,$ we have%
\begin{align*}
U_{n-1}U_{n}y  &  =\left(  A_{n-1}\partial_{x}-B_{n-1}\right)  \left(
A_{n}y^{\prime}-B_{n}y\right) \\
&  =A_{n-1}\left(  \partial_{x}A_{n}y^{\prime}+A_{n}y^{\prime\prime}%
-\partial_{x}B_{n}y-B_{n}y^{\prime}\right)  -B_{n-1}\left(  A_{n}y^{\prime
}-B_{n}y\right)  ,
\end{align*}
and therefore%
\begin{gather*}
\left(  \gamma_{n-1}U_{n-1}U_{n}-xU_{n}+1\right)  y=\gamma_{n-1}A_{n-1}%
A_{n}y^{\prime\prime}\\
+\gamma_{n-1}\left[  A_{n-1}\left(  \partial_{x}A_{n}-B_{n}\right)
-A_{n}B_{n-1}\right]  y^{\prime}-xA_{n}y^{\prime}\\
+\gamma_{n-1}\left(  B_{n}B_{n-1}-A_{n-1}\partial_{x}B_{n}\right)
y+xB_{n}y+y.
\end{gather*}

From (\ref{A,B}) and (\ref{C}), we see that%
\[
\gamma_{n-1}A_{n-1}A_{n}=\frac{\phi^{2}}{4\gamma_{n}C_{n-1}C_{n}},
\]%
\[
\gamma_{n-1}\left[  A_{n-1}\left(  \partial_{x}A_{n}-B_{n}\right)
-A_{n}B_{n-1}\right]  -xA_{n}=-\frac{\left(  \phi-1\right)  C_{n}+\phi
}{2\gamma_{n}C_{n-1}C_{n}^{2}}x\phi,
\]
and%
\begin{align*}
&  \gamma_{n-1}\left(  B_{n}B_{n-1}-A_{n-1}\partial_{x}B_{n}\right)
+xB_{n}+1\\
&  =\frac{\left(  n-2\gamma_{n}\right)  \left[  2x^{2}\phi-\left(  \phi
+nx^{2}-2x^{2}\phi-2x^{2}\gamma_{n}\right)  C_{n}\right]  }{4\gamma_{n}%
C_{n-1}C_{n}^{2}}+1.
\end{align*}
Multiplying by $4\gamma_{n}C_{n-1}C_{n}^{2},$ the result follows.
\end{proof}

\begin{remark}
We can write the third term in the differential operator $D_{n}$ as
\[
\allowbreak\left(  n-2\gamma_{n}\right)  \left[  \left(  2x^{2}-1\right)
C_{n}+2x^{2}\right]  \phi+\left[  4\gamma_{n}C_{n-1}C_{n}-\left(
n-2\gamma_{n}\right)  ^{2}x^{2}\right]  C_{n}%
\]
and since from (\ref{C}) we see that $C_{n}\left(  x;z\right)  =\phi\left(
x;z\right)  +l_{n}\left(  z\right)  ,$ we have%
\[
4\gamma_{n}C_{n-1}C_{n}-\left(  n-2\gamma_{n}\right)  ^{2}x^{2}=4\gamma
_{n}\left(  \phi+l_{n-1}\right)  \left(  \phi+l_{n}\right)  -\left(
n-2\gamma_{n}\right)  ^{2}x^{2},
\]
where%
\begin{equation}
l_{n}\left(  z\right)  =\gamma_{n}\left(  z\right)  +\gamma_{n+1}\left(
z\right)  -n-\frac{1}{2}. \label{ln}%
\end{equation}
The Laguerre-Freud equation (\ref{LF}) can be written as%
\[
4\gamma_{n}l_{n}l_{n-1}=z^{2}\left(  n-2\gamma_{n}\right)  ^{2},
\]
and therefore%
\begin{gather*}
4\gamma_{n}\left(  \phi+l_{n-1}\right)  \left(  \phi+l_{n}\right)  -\left(
n-2\gamma_{n}\right)  ^{2}x^{2}=4\gamma_{n}\left(  \phi+l_{n}+l_{n-1}%
-\frac{l_{n}l_{n-1}}{z^{2}}\right)  \phi\\
=\left[  4\gamma_{n}\left(  C_{n}+l_{n-1}\right)  -\left(  n-2\gamma
_{n}\right)  ^{2}\right]  \phi.
\end{gather*}

As a consequence, we can write the differential equation for $P_{n}\left(
x;z\right)  $ in the reduced form%
\begin{equation}%
\begin{tabular}
[c]{l}%
$\phi C_{n}\partial_{x}^{2}P_{n}-2x\left[  \left(  \phi-1\right)  C_{n}%
+\phi\right]  \partial_{x}P_{n}+\left(  n-2\gamma_{n}\right)  \left[  \left(
2x^{2}-1\right)  C_{n}+2x^{2}\right]  P_{n}$\\
$+\left[  4\gamma_{n}\left(  C_{n}+l_{n-1}\right)  -\left(  n-2\gamma
_{n}\right)  ^{2}\right]  C_{n}P_{n}=0.$%
\end{tabular}
\label{ODE}%
\end{equation}

\end{remark}

Using (\ref{gan asymp}) in (\ref{Vn}), we get
\[
D_{n}\sim-n\phi^{2}\partial_{x}^{2}+2nx\phi\left(  \phi-1\right)  \partial
_{x}+n^{3}\phi,\quad n\rightarrow\infty,
\]
and therefore we see that if $P_{n}\left(  x;z\right)  \sim\Phi^{n}\left(
x;z\right)  $ as $n\rightarrow\infty,$ then to leading order
\[
1-\phi\left(  \frac{\partial_{x}\Phi}{\Phi}\right)  ^{2}=0.
\]
The solutions of this Riccati equation are%
\[
\Phi_{\pm}\left(  x;z\right)  =\frac{x\pm\sqrt{\phi\left(  x;z\right)  }}{2},
\]
in agreement with (\ref{Pn asymp}).$\allowbreak$

\section{Electrostatic interpretation of the zeros}

It is very well known that the zeros of orthogonal polynomials with respect to
a positive definite linear functional are real, simple, and located in the
interior of the convex hull of the support of the linear functional
\cite{MR0481884}. Thus, let denote by $\left\{  x_{n,k}\left(  z\right)
\right\}  _{1\leq k\leq n}$ the zeros of $P_{n}\left(  x;z\right)  $ in an
increasing order, i.e.
\[
P_{n}\left(  x_{n,k};z\right)  =0,\quad1\leq k\leq n,
\]

\noindent and $x_{n,1} < x_{n,2} < \cdots< x_{n,n}.$

Evaluating the operator $D_{n}$ at $x=x_{n,k},$ we see that%
\[
\left[  \frac{\partial_{x}^{2}P_{n}}{\partial_{x}P_{n}}\right]  _{x=x_{n,k}%
}-2x_{n,k}\frac{\left[  \phi\left(  x_{n,k};z\right)  -1\right]  C_{n}\left(
x_{n,k};z\right)  +\phi\left(  x_{n,k};z\right)  }{\phi\left(  x_{n,k}%
;z\right)  C_{n}\left(  x_{n,k};z\right)  }=0,
\]
or, using (\ref{C}),%
\begin{equation}
\left[  \frac{\partial_{x}^{2}P_{n}}{\partial_{x}P_{n}}\right]  _{x=x_{n,k}%
}-2x_{n,k}+\frac{1}{x_{n,k}-z}+\frac{1}{x_{n,k}+z}-\frac{1}{x_{n,k}-\zeta
_{n}(z)}-\frac{1}{x_{n,k}+\zeta_{n}(z)}=0, \label{Poten1}%
\end{equation}
where%
\begin{equation}
\zeta_{n}^{2}\left(  z\right)  =z^{2}+n+\frac{1}{2}-\gamma_{n}-\gamma_{n+1}.
\label{zetan}%
\end{equation}

Using (\ref{zetan}) in (\ref{LF}), we get%
\[
\gamma_{n}\left(  \zeta_{n}^{2}(z)-z^{2}\right)  \left(  \zeta_{n-1}%
^{2}(z)-z^{2}\right)  =z^{2}\left(  \frac{n}{2}-\gamma_{n}\right)  ^{2},
\]
and since
\[
\zeta_{0}^{2}(z) -z^{2}=\frac{1}{2}-\frac{\mu_{1}}{\mu_{0}}=\frac{ze^{-z^{2}}%
}{\sqrt{\pi}\operatorname{erf}\left(  z\right)  }>0,\quad z\in\mathbb{R},
\]
it follows by induction that $\zeta_{n}^{2}(z) -z^{2}>0$ for all
$n\in\mathbb{N}_{0}.$ In fact, one can show that%
\[
\underset{z\in\mathbb{R}}{\min}\zeta_{n}^{2}\left(  z\right)  =\zeta_{n}%
^{2}\left(  0\right)  =n+\frac{1}{2},\quad n\in\mathbb{N}_{0}.
\]

Using (\ref{gan asymp}) in (\ref{zetan}), we obtain%
\[
\zeta_{n}^{2}\left(  z\right)  \sim n+\frac{z^{2}+1}{2}-\frac{z^{2}}{8}%
n^{-2}+\frac{z^{2}\left(  1-z^{2}\right)  }{8}n^{-3},\quad n\rightarrow
\infty,
\]
and therefore%
\[
\zeta_{n}\left(  z\right)  -z=\sqrt{n}-z+O\left(  n^{-\frac{1}{2}}\right)
,\quad n\rightarrow\infty.
\]
Thus, for $z=O\left(  1\right)  $, the points $\pm\zeta_{n}\left(  z\right)  $
are outside the interval $\left[  -z,z\right]  ,$ and "moving" outwards as
$n\rightarrow\infty.$

Using the previous results, we have shown the following theorem.

\begin{theorem}
The zeros of $P_{n}\left(  x;z\right)  $ are located at the equilibrium points
of $n$ unit charged particles located in the interval $(-z, z)$ under the
influence of the potential%
\[
V_{n}\left(  x;z\right)  =x^{2}-\ln\left\vert x^{2}-z^{2}\right\vert
+\ln\left\vert x^{2}-\zeta_{n}^{2}\left(  z\right)  \right\vert .
\]

\end{theorem}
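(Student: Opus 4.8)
The plan is to exhibit the zeros $\{x_{n,k}(z)\}_{1\le k\le n}$ as an equilibrium configuration of an explicit energy, and then to verify via the differential equation (\ref{Poten1}) that the critical-point condition holds automatically at those zeros. First I would introduce the electrostatic energy of $n$ unit charges placed at $t_1,\dots,t_n\in(-z,z)$,
\[
E(t_1,\dots,t_n)=\sum_{k=1}^{n}V_n(t_k;z)-2\sum_{1\le i<j\le n}\ln\left|t_i-t_j\right|,
\]
which combines the external field $V_n$ with the pairwise logarithmic repulsion of the charges. Because $-\ln\left|x^2-z^2\right|\to+\infty$ as $x\to\pm z$ while $\ln\left|x^2-\zeta_n^2\right|$ stays finite there (since $\zeta_n^2(z)>z^2$, as shown above using (\ref{zetan}) and the inductive argument), the field $V_n$ acts as an infinite confining barrier at $\pm z$; together with the blow-up of the repulsion term as two charges coalesce, this forces $E\to+\infty$ on the boundary of the open simplex $\{-z<t_1<\cdots<t_n<z\}$, so $E$ attains a minimum at an interior critical point.

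Next I would write the equilibrium equations $\partial E/\partial t_k=0$, namely
\[
V_n'(t_k;z)=2\sum_{j\ne k}\frac{1}{t_k-t_j},\qquad 1\le k\le n,
\]
and record the partial-fraction form of the field derivative,
\[
V_n'(x;z)=2x-\frac{1}{x-z}-\frac{1}{x+z}+\frac{1}{x-\zeta_n(z)}+\frac{1}{x+\zeta_n(z)}.
\]

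The key step is to match this with (\ref{Poten1}). Since $P_n(x;z)$ is monic with simple real zeros in $(-z,z)$, writing $P_n=\prod_j(x-x_{n,j})$ gives the elementary identity
\[
\frac{\partial_x^2 P_n(x_{n,k};z)}{\partial_x P_n(x_{n,k};z)}=2\sum_{j\ne k}\frac{1}{x_{n,k}-x_{n,j}}
\]
at each zero. Substituting this into (\ref{Poten1}) and using the partial-fraction expression for $V_n'$, the equation collapses to $V_n'(x_{n,k};z)=2\sum_{j\ne k}1/(x_{n,k}-x_{n,j})$, which is precisely the equilibrium condition. Hence the zeros of $P_n$ form an equilibrium configuration for $V_n$.

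The computation of $V_n'$ and of the $\partial_x^2P_n/\partial_xP_n$ identity is routine, so the main obstacle is upgrading ``critical point'' to ``the minimizing (equilibrium) configuration.'' The potential $V_n$ is not globally convex, so strict convexity of $E$ is not immediate; I would instead argue uniqueness in the Stieltjes manner: any interior minimizer satisfies the same rational system, and the monic degree-$n$ polynomial whose roots solve that system is forced, through (\ref{Poten1}), to coincide with $P_n$. For the theorem as stated it already suffices that the zeros satisfy the critical-point equations, which the verification above establishes.
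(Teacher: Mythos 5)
Your proposal is correct and follows essentially the same route as the paper: both substitute the identity $\left[\partial_x^{2}P_{n}/\partial_x P_{n}\right]_{x=x_{n,k}}=2\sum_{j\neq k}(x_{n,k}-x_{n,j})^{-1}$ into equation (\ref{Poten1}) and recognize the resulting system as the critical-point equations for the energy $E$ with external field $V_{n}$. Your additional remarks on the existence of an interior minimizer and on uniqueness go beyond what the paper proves, but the core verification is identical.
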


\begin{proof}
As it's well known, if we write
\[
P_{n}\left(  x;z\right)  =%
{\displaystyle\prod\limits_{k=1}^{n}}
\left(  x-x_{n,k}\right)  ,
\]
then \cite[Chapter 10]{MR4331433}%
\[
\left[  \frac{\partial_{x}^{2}P_{n}}{\partial_{x}P_{n}}\right]  _{x=x_{n,k}%
}=\sum_{\substack{j=1\\j\neq k}}^{n}\frac{2}{x_{n,k}-x_{n,j}},
\]
and therefore (\ref{Poten1}) gives%
\[
\sum_{\substack{j=1\\j\neq k}}^{n}\frac{2}{x_{n,j}-x_{n,k}}+2x_{n,k}-\frac
{1}{x_{n,k}-z}-\frac{1}{x_{n,k}+z}+\frac{1}{x_{n,k}-\zeta_{n}}+\frac
{1}{x_{n,k}+\zeta_{n}}=0,
\]
or equivalently
\[
\frac{\partial E}{\partial x_{n,k}}=0,\quad1\leq k\leq n,
\]
where the total energy of the system is%
\[
E\left(  x_{n,1},\ldots,x_{n,n}\right)  =-2\sum_{1\leq j<k\leq n}^{n}%
\ln\left\vert x_{n,k}-x_{n,j}\right\vert +\sum_{k=1}^{n}x_{n,k}^{2}%
-\ln\left\vert x_{n,k}^{2}-z^{2}\right\vert +\ln\left\vert x_{n,k}^{2}%
-\zeta_{n}^{2}\right\vert .
\]
It follows that the external potential is%
\[
V_{n}\left(  x;z\right)  =x^{2}-\ln\left\vert x^{2}-z^{2}\right\vert
+\ln\left\vert x^{2}-\zeta_{n}^{2}\right\vert .
\]

\end{proof}

\section{Toda-type behavior}

Differentiating (\ref{L}) with respect to $z,$ we have
\begin{equation}
\partial_{z}L\left[  p\left(  x;z\right)  \right]  =e^{-z^{2}}\left[  p\left(
z;z\right)  +p\left(  -z;z\right)  \right]  +L\left[  \partial_{z}p\left(
x;z\right)  \right]  , \label{Dz L}%
\end{equation}
and we note that%
\begin{equation}
\partial_{z}L\left[  \phi p\right]  =L\left[  \partial_{z}\left(  \phi
p\right)  \right]  , \label{Dz L phi}%
\end{equation}
where $\phi\left(  x;z\right)  $ was defined in (\ref{phi}).

In particular,%
\[
u_{n}^{\prime}=\partial_{z}L\left[  x^{2n}\right]  =2z^{2n}e^{-z^{2}},
\]
and using (\ref{mu req}), we get
\begin{equation}
zu_{n}^{\prime}=2z^{2n+1}e^{-z^{2}}=\left(  2n+1\right)  u_{n}-2u_{n+1}.
\label{Dz un}%
\end{equation}
On the other hand, using (\ref{Dz L phi}) we have%
\[
u_{n+1}^{\prime}-2zu_{n}-z^{2}u_{n}^{\prime}=\partial_{z}\left(  u_{n+1}%
-z^{2}u_{n}\right)  =-2zu_{n},
\]
and therefore%
\begin{equation}
u_{n+1}^{\prime}=z^{2}u_{n}^{\prime},\quad n\geq0. \label{Dz un 1}%
\end{equation}

Using the differential-recurrence for the moments (\ref{Dz un 1}), we can
obtain a first order ODE (in $z)$ for the Stieltjes function $S(t;z).$

\begin{proposition}
Let the function $S(t;z)$ be defined by (\ref{S1}), and $\phi\left(
x;z\right)  $ be defined by (\ref{phi}). Then,%
\begin{equation}
\phi\left(  t;z\right)  \partial_{z}S=2te^{-z^{2}}. \label{Sz}%
\end{equation}

\end{proposition}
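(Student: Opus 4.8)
The plan is to work directly with the series representation of $S$ in (\ref{S1}) and reduce everything to the two moment identities (\ref{Dz un}) and (\ref{Dz un 1}) already established. First I would differentiate (\ref{S1}) term by term with respect to $z$, which gives
\[
\partial_z S = \sum_{n\geq 0}\frac{u_n'(z)}{t^{2n+1}},
\]
a formal power series in $t^{-1}$ of the same shape as $S$ itself. The goal is to show that multiplying this by $\phi(t;z)=t^2-z^2$ collapses the entire series to the single term $2te^{-z^2}$.

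The key computational step is to evaluate $t^2\,\partial_z S$ and peel off the lowest-order term. Writing
\[
t^2\,\partial_z S = \sum_{n\geq 0}\frac{u_n'(z)}{t^{2n-1}} = t\,u_0'(z) + \sum_{n\geq 1}\frac{u_n'(z)}{t^{2n-1}},
\]
I would reindex the remaining sum by $m=n-1$ to get $\sum_{m\geq 0} u_{m+1}'(z)\,t^{-(2m+1)}$. Here the relation (\ref{Dz un 1}), namely $u_{n+1}'=z^2u_n'$, is exactly what is needed: it turns each coefficient $u_{m+1}'$ into $z^2u_m'$, so the remaining sum is precisely $z^2\,\partial_z S$. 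Hence $t^2\,\partial_z S = t\,u_0'(z) + z^2\,\partial_z S$, which rearranges to
\[
\phi(t;z)\,\partial_z S = (t^2-z^2)\,\partial_z S = t\,u_0'(z).
\]

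To finish, I would identify $u_0'(z)$. From the formula $u_n'=2z^{2n}e^{-z^2}$ recorded just before (\ref{Dz un}) (equivalently, differentiating $u_0=\sqrt{\pi}\operatorname{erf}(z)$), one obtains $u_0'(z)=2e^{-z^2}$, and substituting this yields $\phi(t;z)\,\partial_z S = 2te^{-z^2}$, which is exactly (\ref{Sz}). I do not anticipate a genuine obstacle here: the argument is essentially the generating-function translate of the single shift relation (\ref{Dz un 1}), and the only point requiring a little care is the bookkeeping of the isolated $n=0$ term when multiplying by $t^2$, since that term is what produces the nonhomogeneous right-hand side while every other term is absorbed into the $z^2\,\partial_z S$ piece.
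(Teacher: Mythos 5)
Your proof is correct and follows essentially the same route as the paper: both arguments reduce to the shift identity $\sum_{n\geq0}u_{n+1}'/t^{2n+1}=t^{2}\partial_{z}S-tu_{0}'$ combined with the relation $u_{n+1}'=z^{2}u_{n}'$ from (\ref{Dz un 1}), and then evaluate $u_{0}'=2e^{-z^{2}}$. The only cosmetic difference is that the paper shifts the series via (\ref{u n+k}) before differentiating in $z$, whereas you differentiate first and then reindex.
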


\begin{proof}
Using (\ref{u n+k}), we have%
\[%
{\displaystyle\sum\limits_{n\geq0}}
\frac{u_{n+1}}{t^{2n+1}}=t^{2}S-tu_{0},
\]
and therefore (\ref{Dz un 1}) gives
\[
t^{2}\partial_{z}S-tu_{0}^{\prime}=z^{2}\partial_{z}S.
\]
Using (\ref{mu0}), the result follows.
\end{proof}

\begin{remark}
Clearly, (\ref{Sz}) and the initial condition $S\left(  t;0\right)  =0$ yield%
\[
S(t;z)=2t%
{\displaystyle\int\limits_{0}^{z}}
\frac{e^{-x^{2}}}{t^{2}-x^{2}}dx,
\]
which is just the definition (\ref{S}), since%
\[%
{\displaystyle\sum\limits_{n\geq0}}
\frac{x^{2n}}{t^{2n+1}}=\frac{t}{t^{2}-x^{2}}.
\]

\end{remark}

Next, we will obtain some equations relating $h_{n}^{\prime},\gamma
_{n}^{\prime}$ with $h_{n},\gamma_{n}.$

\begin{theorem}
The functions $h_{n}\left(  z\right)  ,\gamma_{n}\left(  z\right)  $ satisfy
the Toda-type equations%
\begin{equation}
\vartheta\ln\left(  h_{n}\right)  =2n+1-2\left(  \gamma_{n+1}+\gamma
_{n}\right)  , \label{D hn}%
\end{equation}
and%
\begin{equation}
\vartheta\ln\left(  \gamma_{n}\right)  =2\left(  \gamma_{n-1}-\gamma
_{n+1}+1\right)  , \label{D gn}%
\end{equation}
where $\vartheta$ is the operator defined by%
\begin{equation}
\vartheta=z\partial_{z}. \label{theta}%
\end{equation}

\end{theorem}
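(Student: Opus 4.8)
The plan is to start from the definition $h_n = L[P_n^2]$ and differentiate directly in $z$ by means of the Leibniz-type rule (\ref{Dz L}), then trade the resulting derivative for recurrence coefficients using the already-established identity (\ref{LF6}). Setting $p = P_n^2$ in (\ref{Dz L}) produces a boundary term $e^{-z^2}[P_n^2(z;z)+P_n^2(-z;z)]$ together with the bulk term $L[\partial_z(P_n^2)] = 2L[P_n\,\partial_z P_n]$. Two simplifications do the work. First, the symmetry relation (\ref{Pn odd}) gives $P_n(-z;z)=(-1)^nP_n(z;z)$, so the two boundary contributions coincide and add to $2e^{-z^2}P_n^2(z;z)$. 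Second, since $P_n$ is monic with leading coefficient independent of $z$, the polynomial $\partial_z P_n$ has $\deg_x(\partial_z P_n)\le n-1$, whence orthogonality (\ref{ortho}) forces $L[P_n\,\partial_z P_n]=0$. This already yields the clean formula $\partial_z h_n = 2e^{-z^2}P_n^2(z;z)$.

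Next I would multiply by $z$ to form $\vartheta h_n = 2z\,e^{-z^2}P_n^2(z;z)$ and substitute the value of $e^{-z^2}P_n^2(z;z)$ supplied by (\ref{LF6}). The prefactor $z$ cancels the $2z$ in the denominator exactly, leaving $\vartheta h_n = (2n+1)h_n - 2(h_{n+1}+\gamma_n^2 h_{n-1})$. Dividing through by $h_n$ and rewriting the two ratios via (\ref{gamma-h}) as $h_{n+1}/h_n = \gamma_{n+1}$ and $\gamma_n^2\, h_{n-1}/h_n = \gamma_n$ collapses the right-hand side to $2n+1-2(\gamma_{n+1}+\gamma_n)$, which is precisely (\ref{D hn}).

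Finally, (\ref{D gn}) follows purely algebraically from (\ref{D hn}) together with $\gamma_n = h_n/h_{n-1}$. Writing $\vartheta\ln\gamma_n = \vartheta\ln h_n - \vartheta\ln h_{n-1}$ and inserting (\ref{D hn}) at levels $n$ and $n-1$, the $2n\pm1$ contributions leave a residue of $2$, while the telescoping of the $-2(\gamma_{\bullet+1}+\gamma_\bullet)$ terms leaves $-2(\gamma_{n+1}-\gamma_{n-1})$, giving $\vartheta\ln\gamma_n = 2(\gamma_{n-1}-\gamma_{n+1}+1)$.

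I do not anticipate a serious obstacle here. The single point that must be handled with care is the vanishing of $L[P_n\,\partial_z P_n]$, which hinges on the $z$-independence of the monic leading coefficient so that $\deg_x(\partial_z P_n)<n$ and orthogonality applies. Everything else is the bookkeeping of substituting (\ref{LF6}) and reducing the ratios $h_{n+1}/h_n$ and $\gamma_n^2 h_{n-1}/h_n$ through (\ref{gamma-h}).
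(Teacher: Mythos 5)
Your proposal is correct and follows essentially the same route as the paper: differentiate $h_n=L[P_n^2]$ via (\ref{Dz L}), kill $L[P_n\,\partial_z P_n]$ by orthogonality since $\deg_x(\partial_z P_n)<n$, substitute (\ref{LF6}), and obtain (\ref{D gn}) by telescoping $\vartheta\ln\gamma_n=\vartheta\ln h_n-\vartheta\ln h_{n-1}$. The only cosmetic difference is that you spell out the boundary-term symmetry and bound $\deg_x(\partial_z P_n)\le n-1$ where the paper states $\le n-2$; both suffice.
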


\begin{proof}
Using (\ref{Dz L}), we have%
\[
h_{n}^{\prime}=\partial_{z}L\left[  P_{n}^{2}\right]  =2e^{-z^{2}}P_{n}%
^{2}\left(  z;z\right)  +L\left[  2P_{n}\partial_{z}P_{n}\right]  ,
\]
but since $\deg\left(  \partial_{z}P_{n}\right)  \leq n-2$ we can use
(\ref{ortho}) and (\ref{LF6}) and obtain%
\[
zh_{n}^{\prime}=\left(  2n+1\right)  h_{n}-2\left(  h_{n+1}+\gamma_{n}%
^{2}h_{n-1}\right)  ,
\]
or, using (\ref{gamma-h})%
\[
z\frac{h_{n}^{\prime}}{h_{n}}=2n+1-2\left(  \gamma_{n+1}+\gamma_{n}\right)  .
\]

Note that (\ref{gamma-h}) gives%
\[
\frac{\gamma_{n}^{\prime}}{\gamma_{n}}=\frac{h_{n}^{\prime}}{h_{n}}%
-\frac{h_{n-1}^{\prime}}{h_{n-1}},
\]
and hence we can write (\ref{D hn}) in terms of $\gamma_{n}$%
\[
z\frac{\gamma_{n}^{\prime}}{\gamma_{n}}=\allowbreak2\left(  \gamma
_{n-1}-\gamma_{n+1}+1\right)  .
\]

\end{proof}

If we combine the Laguerre-Freud equation for $\gamma_{n}$ (\ref{LF}) and the
Toda-type equation for $h_{n}\left(  z\right)  ,$ we obtain the following result.

\begin{proposition}
The function $h_{n}\left(  z\right)  $ satisfies
\[
h_{n}^{\prime}h_{n-1}^{\prime}=\left(  nh_{n-1}-2h_{n}\right)  ^{2},\quad
n\geq1.
\]

\end{proposition}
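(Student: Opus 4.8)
The plan is to combine the two ingredients named in the statement: the Toda-type equation (\ref{D hn}) for $h_n$ and the Laguerre-Freud equation (\ref{LF}) for $\gamma_n$, and then to eliminate the $\gamma$'s in favor of the $h$'s via (\ref{gamma-h}). First I would unfold the operator $\vartheta = z\partial_z$ from (\ref{theta}) in (\ref{D hn}), writing it both at the index $n$ and at the shifted index $n-1$:
\[
z h_n' = \left(2n+1 - 2\gamma_{n+1} - 2\gamma_n\right) h_n, \qquad z h_{n-1}' = \left(2n-1 - 2\gamma_n - 2\gamma_{n-1}\right) h_{n-1}.
\]
Multiplying these two identities together yields
\[
z^2 h_n' h_{n-1}' = \left(2n+1 - 2\gamma_{n+1} - 2\gamma_n\right)\left(2n-1 - 2\gamma_n - 2\gamma_{n-1}\right) h_n h_{n-1}.
\]

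The key observation is that the two linear factors produced here are precisely twice the two factors that multiply $\gamma_n$ in (\ref{LF}). Indeed, since $n + \frac{1}{2} - \gamma_n - \gamma_{n+1} = \frac{1}{2}(2n+1 - 2\gamma_n - 2\gamma_{n+1})$ and $n - \frac{1}{2} - \gamma_n - \gamma_{n-1} = \frac{1}{2}(2n-1 - 2\gamma_n - 2\gamma_{n-1})$, the Laguerre-Freud equation (\ref{LF}) can be rewritten as
\[
\left(2n+1 - 2\gamma_{n+1} - 2\gamma_n\right)\left(2n-1 - 2\gamma_n - 2\gamma_{n-1}\right) = \frac{4z^2}{\gamma_n}\left(\frac{n}{2} - \gamma_n\right)^2.
\]
Substituting this into the product above cancels the factor $z^2$ on both sides and leaves
\[
h_n' h_{n-1}' = \frac{4 h_n h_{n-1}}{\gamma_n}\left(\frac{n}{2} - \gamma_n\right)^2.
\]

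Finally I would invoke (\ref{gamma-h}), namely $\gamma_n = h_n / h_{n-1}$, which gives $h_n / \gamma_n = h_{n-1}$ and hence $h_n h_{n-1}/\gamma_n = h_{n-1}^2$, together with $2\gamma_n = 2h_n/h_{n-1}$. Then
\[
\frac{4 h_n h_{n-1}}{\gamma_n}\left(\frac{n}{2} - \gamma_n\right)^2 = h_{n-1}^2 (n - 2\gamma_n)^2 = h_{n-1}^2 \left(\frac{n h_{n-1} - 2h_n}{h_{n-1}}\right)^2 = (n h_{n-1} - 2 h_n)^2,
\]
which is the claimed identity for $n\geq1$, the range on which both input equations are valid. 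The entire argument is a direct substitution; the only step that requires any recognition is matching the pair of linear factors produced by the Toda equation against the factorized form of the Laguerre-Freud equation, after which the $z^2$ cancels and (\ref{gamma-h}) handles the remaining bookkeeping. I therefore expect no genuine obstacle beyond that single algebraic identification.
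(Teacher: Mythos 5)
Your proof is correct and follows essentially the same route as the paper: both write the Toda-type equation (\ref{D hn}) at indices $n$ and $n-1$, multiply, identify the resulting factors with those in the Laguerre--Freud equation (\ref{LF}), and finish with (\ref{gamma-h}). The only difference is cosmetic (you unfold $\vartheta\ln h_n$ explicitly as $zh_n'/h_n$, while the paper keeps the logarithmic-derivative notation).
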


\begin{proof}
From (\ref{D hn}), we see that%
\[
\frac{1}{2}\vartheta\ln\left(  h_{n}\right)  =n+\frac{1}{2}-\left(
\gamma_{n+1}+\gamma_{n}\right)  ,
\]
and using this in (\ref{LF}), we get%
\[
\frac{1}{2}\vartheta\ln\left(  h_{n}\right)  \frac{1}{2}\vartheta\ln\left(
h_{n-1}\right)  =\frac{z^{2}}{\gamma_{n}}\left(  \frac{n}{2}-\gamma
_{n}\right)  ^{2},
\]
or using (\ref{gamma-h}) and (\ref{theta})%
\[
h_{n}\partial_{z}\ln\left(  h_{n}\right)  h_{n-1}\partial_{z}\ln\left(
h_{n-1}\right)  =\left(  nh_{n-1}-2h_{n}\right)  ^{2},
\]
and the result follows.
\end{proof}

\subsection{Nonlinear ODE}

Using the Laguerre-Freud equations (\ref{LaguerreFreud1}), (\ref{LF}) and the
differential-recurrence relation (\ref{D gn}), we can derive a nonlinear
second order ODE for $\gamma_{n}\left(  z\right)  .$

\begin{theorem}
The function $\gamma_{n}\left(  z\right)  $ satisfies
\begin{equation}
z^{2}\left[  \gamma_{n}^{\prime\prime}+2\left(  6\gamma_{n}-n\right)  \left(
2\gamma_{n}-n\right)  \right]  ^{2}=4\left(  z^{2}+2\gamma_{n}-n\right)
^{2}\left[  \left(  \gamma_{n}^{\prime}\right)  ^{2}+4\gamma_{n}\left(
2\gamma_{n}-n\right)  ^{2}\right]  . \label{Painleve}%
\end{equation}

\end{theorem}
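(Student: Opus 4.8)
The plan is to reduce this four–index equation to a genuine ODE in $z$ by packaging the nearest–neighbour information into the single quantity $\Sigma:=l_{n}+l_{n-1}$, where $l_{n}$ is defined in (\ref{ln}). First I would rewrite the two pieces of discrete data I am allowed to use in terms of $l_{n},l_{n-1}$: the Laguerre–Freud equation (\ref{LF}) is exactly $4\gamma_{n}l_{n}l_{n-1}=z^{2}(n-2\gamma_{n})^{2}$ (the product form already recorded in the Remark), while the Toda relation (\ref{D gn}) rearranges to $l_{n}-l_{n-1}=-\tfrac{z\gamma_{n}'}{2\gamma_{n}}$. Writing $D:=l_{n}-l_{n-1}$, the identity $\Sigma^{2}=D^{2}+4l_{n}l_{n-1}$ then gives
\[
\Sigma^{2}=\frac{z^{2}}{4\gamma_{n}^{2}}\left[\left(\gamma_{n}'\right)^{2}+4\gamma_{n}\left(2\gamma_{n}-n\right)^{2}\right],
\]
so the bracket on the right of (\ref{Painleve}) is, up to the factor $z^{2}/4\gamma_{n}^{2}$, just $\Sigma^{2}$. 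This reveals that (\ref{Painleve}) is the square of the cleaner \emph{unsquared} identity
\[
z^{2}\left[\gamma_{n}''+2\left(6\gamma_{n}-n\right)\left(2\gamma_{n}-n\right)\right]=4\gamma_{n}\left(z^{2}+2\gamma_{n}-n\right)\Sigma,
\]
and it therefore suffices to prove this last relation.

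Next I would produce $\gamma_{n}''$ from the Toda structure. With $\vartheta=z\partial_{z}$ as in (\ref{theta}) one has $z^{2}\gamma_{n}''=\vartheta^{2}\gamma_{n}-\vartheta\gamma_{n}$, and (\ref{D gn}) reads $\vartheta\gamma_{n}=-2D\gamma_{n}$. Applying $\vartheta$ once more yields
\[
z^{2}\gamma_{n}''=-2\gamma_{n}\,\vartheta D+4D^{2}\gamma_{n}+2D\gamma_{n},
\]
so everything hinges on evaluating $\vartheta D=\vartheta l_{n}-\vartheta l_{n-1}$.

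This is where I expect the real work to be. Differentiating $l_{n}=\gamma_{n}+\gamma_{n+1}-n-\tfrac12$ and invoking (\ref{D gn}) at the shifted indices $n+1$ and $n-1$ introduces the second neighbours $\gamma_{n+2}$ and $\gamma_{n-2}$, so $\vartheta D$ is not \emph{a priori} expressible through $\gamma_{n},\Sigma,D$ alone. The first Laguerre–Freud equation (\ref{LaguerreFreud1}), taken at indices $n$ and $n-1$, supplies exactly the two relations needed to eliminate $\gamma_{n+1}\gamma_{n+2}$ and $\gamma_{n-1}\gamma_{n-2}$. The delicate point will be the bookkeeping in this elimination—collecting the $\gamma_{n\pm1}^{2}$ terms, the cross terms and the linear terms—after which the neighbour dependence should enter only through $\gamma_{n+1}+\gamma_{n-1}=\Sigma-2\gamma_{n}+2n$ and through $D$, and the expression should collapse to
\[
\vartheta D=\Sigma^{2}+D^{2}+D-2\Sigma\left(2\gamma_{n}-n+z^{2}\right)+4z^{2}\left(2\gamma_{n}-n\right).
\]

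Finally I would substitute this into the formula for $z^{2}\gamma_{n}''$. The terms $-2\gamma_{n}\Sigma^{2}+2\gamma_{n}D^{2}$ collapse through $\gamma_{n}(\Sigma^{2}-D^{2})=4\gamma_{n}l_{n}l_{n-1}=z^{2}(2\gamma_{n}-n)^{2}$ (which is (\ref{LF}) once more), removing the explicit $\gamma_{n}$ and all $D$–dependence; the surviving terms reorganize, using the polynomial identity $(2\gamma_{n}-n)+4\gamma_{n}=6\gamma_{n}-n$, into precisely the unsquared identity displayed above. Squaring it and inserting the expression for $\Sigma^{2}$ from the first step then produces (\ref{Painleve}), the sign lost in squaring being immaterial. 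All divisions performed along the way are by $\gamma_{n}$ and $z$, both positive since $L$ is positive definite and $z>0$, so no degenerate cases arise.
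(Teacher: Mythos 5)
Your proposal is correct, and the key identity you flagged as the delicate step checks out: carrying out the elimination with (\ref{LaguerreFreud1}) at indices $n$ and $n-1$, together with (\ref{D gn}) at $n\pm1$, does yield exactly
\[
\vartheta D=\Sigma^{2}+D^{2}+D-2\Sigma\left(2\gamma_{n}-n+z^{2}\right)+4z^{2}\left(2\gamma_{n}-n\right),
\]
after which the substitution $\gamma_{n}\left(\Sigma^{2}-D^{2}\right)=4\gamma_{n}l_{n}l_{n-1}=z^{2}\left(2\gamma_{n}-n\right)^{2}$ collapses everything to the unsquared identity, and squaring gives (\ref{Painleve}). The paper uses the same three ingredients --- (\ref{LF}), (\ref{LaguerreFreud1}) and (\ref{D gn}) --- but organizes them differently: it solves (\ref{LF}) at $n-1$ for $\gamma_{n-2}$ (introducing a rational expression with denominator $\gamma_{n-1}l_{n-1}$) and (\ref{LaguerreFreud1}) for $\gamma_{n+2}$, substitutes into the differentiated Toda equation to get the intermediate relation (\ref{non1}), and then asserts that $\gamma_{n+1},\gamma_{n-1}$ can be eliminated from the resulting system, leaving the final (resultant-type) elimination implicit. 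Your route buys two things: the symmetric/antisymmetric packaging $\Sigma=l_{n}+l_{n-1}$, $D=l_{n}-l_{n-1}$ keeps the computation polynomial throughout (no denominators from (\ref{gm2})) and makes the elimination fully explicit; and it isolates the \emph{unsquared} first-integral-like relation
\[
z^{2}\left[\gamma_{n}''+2\left(6\gamma_{n}-n\right)\left(2\gamma_{n}-n\right)\right]=4\gamma_{n}\left(z^{2}+2\gamma_{n}-n\right)\left(l_{n}+l_{n-1}\right),
\]
which is strictly more information than (\ref{Painleve}) itself and explains where the squaring comes from. The only cosmetic caveat is that for $n=1$ the index $\gamma_{n-2}=\gamma_{-1}$ appears in the intermediate steps (as it does in the paper's proof), so that case should be noted or handled by convention.
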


\begin{proof}
Setting $n\rightarrow n-1$ in (\ref{LF}), we have%
\[
\gamma_{n-1}\left(  n-\frac{1}{2}-\gamma_{n-1}-\gamma_{n}\right)  \left(
n-\frac{3}{2}-\gamma_{n-1}-\gamma_{n-2}\right)  =z^{2}\left(  \frac{n-1}%
{2}-\gamma_{n-1}\right)  ^{2},
\]
and solving for $\gamma_{n-2},$ we obtain%
\begin{equation}
\gamma_{n-2}=\frac{z^{2}\left(  \frac{1}{2}-\frac{1}{2}n+\gamma_{n-1}\right)
^{2}}{\gamma_{n-1}\left(  \frac{1}{2}-n+\gamma_{n}+\gamma_{n-1}\right)
}+n-\frac{3}{2}-\gamma_{n-1}. \label{gm2}%
\end{equation}
Solving for $\gamma_{n+2}$ in (\ref{LaguerreFreud1}), we get%
\begin{equation}
\gamma_{n+2}=\frac{\gamma_{n}\left(  \gamma_{n}+\gamma_{n-1}-z^{2}-n+\frac
{1}{2}\right)  +\gamma_{n+1}\left(  n+\frac{3}{2}-\gamma_{n+1}+z^{2}\right)
-\frac{1}{2}z^{2}}{\gamma_{n+1}}. \label{gp2}%
\end{equation}
From (\ref{gm2}) and (\ref{gp2}) we conclude that%
\begin{equation}%
\begin{tabular}
[c]{l}%
$\gamma_{n-1}\gamma_{n-2}+\gamma_{n+1}\gamma_{n+2}=\frac{z^{2}\left(  \frac
{1}{2}-\frac{1}{2}n+\gamma_{n-1}\right)  ^{2}}{\frac{1}{2}-n+\gamma_{n}%
+\gamma_{n-1}}+\left(  n-\frac{3}{2}-\gamma_{n-1}\right)  \gamma_{n-1}$\\
$+\gamma_{n}\left(  \gamma_{n}+\gamma_{n-1}-z^{2}-n+\frac{1}{2}\right)
+\gamma_{n+1}\left(  n+\frac{3}{2}-\gamma_{n+1}+z^{2}\right)  -\frac{1}%
{2}z^{2}.$%
\end{tabular}
\label{gcomb}%
\end{equation}

Differentiating (\ref{D gn}) with respect to $z,$ we see that%
\[
\frac{\left(  z\gamma_{n}^{\prime}\right)  ^{\prime}}{2}=\gamma_{n}^{\prime
}\left(  \gamma_{n-1}-\gamma_{n+1}+1\right)  +\gamma_{n}\left(  \gamma
_{n-1}^{\prime}-\gamma_{n+1}^{\prime}\right)
\]
and using (\ref{D gn}) again we have%
\begin{equation}
\left(  z\gamma_{n}^{\prime}\right)  ^{\prime}=\frac{z\left(  \gamma
_{n}^{\prime}\right)  ^{2}}{\gamma_{n}}+\frac{4}{z}\gamma_{n}\left[
\gamma_{n-1}\left(  \gamma_{n-2}-\gamma_{n}+1\right)  -\gamma_{n+1}\left(
\gamma_{n}-\gamma_{n+2}+1\right)  \right]  . \label{dg2}%
\end{equation}
Using (\ref{gcomb}) in (\ref{dg2}), we get%
\begin{gather}
\frac{z}{4\gamma_{n}}\left(  z\gamma_{n}^{\prime}\right)  ^{\prime}=\left(
\frac{z\gamma_{n}^{\prime}}{2\gamma_{n}}\right)  ^{2}+\allowbreak\frac
{z^{2}\left(  \frac{1}{2}-\frac{1}{2}n+\gamma_{n-1}\right)  ^{2}}{\frac{1}%
{2}-n+\gamma_{n}+\gamma_{n-1}}\nonumber\\
+\gamma_{n}\left(  \frac{1}{2}-n-z^{2}+\gamma_{n}-\gamma_{n+1}\right)
-\frac{1}{2}z^{2}\label{non1}\\
+\gamma_{n+1}\left(  \allowbreak z^{2}+\allowbreak n+\frac{1}{2}-\gamma
_{n+1}\right)  +\gamma_{n-1}\left(  n-\frac{1}{2}-\gamma_{n-1}\right)
.\nonumber
\end{gather}

Note that (\ref{LF}), (\ref{D gn}), and (\ref{non1}) are three equations
relating $\gamma_{n}^{\prime\prime},\gamma_{n}^{\prime},\gamma_{n}%
,\gamma_{n-1}$ and $\gamma_{n+1}.$ Thus, $\gamma_{n+1},\gamma_{n-1}$ can be
eliminated from the system and we obtain (\ref{Painleve}).
\end{proof}

\subsection{Power series}

Using the nonlinear recurrence (\ref{LF}), we can see that%

\begin{equation}
\gamma_{n}\left(  z\right)  =\frac{n^{2}z^{2}}{4n^{2}-1}+\frac{4n^{3}z^{4}%
}{\left(  4n^{2}-1\right)  ^{2}\left(  4n^{2}-9\right)  }+O\left(
z^{6}\right)  ,\quad z\rightarrow0. \label{gn z=0}%
\end{equation}
To obtain higher order terms, we can use (\ref{D gn}).

\begin{theorem}
The Maclaurin series of the function $\gamma_{n}\left(  z\right)  $ is%
\begin{equation}
\gamma_{n}\left(  z\right)  =%
{\displaystyle\sum\limits_{k=1}^{\infty}}
\eta_{n,k}z^{2k}, \label{gn eta}%
\end{equation}
where
\begin{equation}
\eta_{n,1}=\frac{n^{2}}{4n^{2}-1}, \label{eta1}%
\end{equation}
and
\begin{equation}
\eta_{n,k}=\frac{1}{k-1}%
{\displaystyle\sum\limits_{j=1}^{k-1}}
\left(  \eta_{n-1,j}-\eta_{n+1,j}\right)  \eta_{n,k-j},\quad k\geq2.
\label{eta}%
\end{equation}

\end{theorem}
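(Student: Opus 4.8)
The plan is to treat the differential--recurrence relation (\ref{D gn}) as an identity between power series in $z$ and to read off the coefficients. First I would record that $\gamma_n(z)$ is an \emph{even} function of $z$: replacing the parameter $z$ by $-z$ turns $L$ into $-L$ (the limits $-z,z$ get swapped), and this leaves the monic orthogonal sequence unchanged while merely negating each $h_n$, so that $\gamma_n=h_n/h_{n-1}$ is invariant. Together with the vanishing $\gamma_n(0)=0$ already exhibited in (\ref{gn z=0}), this guarantees an expansion of the form (\ref{gn eta}), $\gamma_n(z)=\sum_{k\ge1}\eta_{n,k}z^{2k}$, with no constant term. The seed coefficient $\eta_{n,1}$ in (\ref{eta1}) is then fixed by the lowest-order balance in the Laguerre--Freud equation (\ref{LF}): substituting $\gamma_m=\eta_{m,1}z^2+O(z^4)$ for each index and comparing the $z^2$ terms gives $\eta_{n,1}\left(n^2-\tfrac14\right)=\tfrac{n^2}{4}$, i.e.\ $\eta_{n,1}=n^2/(4n^2-1)$, in agreement with the leading term of (\ref{gn z=0}).

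For the recurrence (\ref{eta}) I would rewrite (\ref{D gn}) in the cleared form
$$z\,\gamma_n' = 2\gamma_n\left(\gamma_{n-1}-\gamma_{n+1}+1\right),$$
which is legitimate since $\gamma_n>0$ for $z>0$. Differentiating the ansatz term by term gives $z\gamma_n'=\sum_{k\ge1}2k\,\eta_{n,k}z^{2k}$, while on the right the Cauchy product of $\gamma_n$ with $\gamma_{n-1}-\gamma_{n+1}=\sum_{j\ge1}(\eta_{n-1,j}-\eta_{n+1,j})z^{2j}$ contributes, at order $z^{2k}$, the quantity $2\sum_{j=1}^{k-1}(\eta_{n-1,j}-\eta_{n+1,j})\eta_{n,k-j}$, and the $+1$ contributes $2\eta_{n,k}$. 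Matching the coefficient of $z^{2k}$ yields
$$2k\,\eta_{n,k}=2\eta_{n,k}+2\sum_{j=1}^{k-1}\left(\eta_{n-1,j}-\eta_{n+1,j}\right)\eta_{n,k-j},$$
and dividing by $2(k-1)$ for $k\ge2$ produces exactly (\ref{eta}). At $k=1$ the relation degenerates to the tautology $2\eta_{n,1}=2\eta_{n,1}$, carrying no information, which is precisely why the seed value $\eta_{n,1}$ must be supplied separately from (\ref{LF}); with that seed, (\ref{eta}) then determines every $\eta_{n,k}$ by induction on $k$ across all indices $n$ simultaneously.

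The term-by-term differentiation and the Cauchy-product bookkeeping are routine, and the one point inside the calculation that genuinely needs care is the $k=1$ degeneracy just noted, since (\ref{D gn}) alone underdetermines the expansion by one constant per index $n$. The main obstacle, if one wants a fully rigorous statement rather than a merely formal one, is justifying that the expansion (\ref{gn eta}) actually converges---equivalently, that $\gamma_n$ is analytic at $z=0$. I would obtain this from the analyticity of the moments $u_n(z)$ in (\ref{mu 2n}) together with the non-vanishing of the relevant Hankel determinants for small $z>0$, after which all of the formal manipulations above are automatically valid.
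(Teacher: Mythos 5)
Your proposal is correct and follows essentially the same route as the paper: expand $\gamma_n$ as an even power series, obtain the seed $\eta_{n,1}$ from the lowest-order balance of the Laguerre--Freud equation (\ref{LF}) (equivalently from (\ref{gn z=0})), and then match coefficients of $z^{2k}$ in the Toda-type relation (\ref{D gn}) via a Cauchy product to get $(k-1)\eta_{n,k}=\sum_{j=1}^{k-1}(\eta_{n-1,j}-\eta_{n+1,j})\eta_{n,k-j}$. Your added remarks on the $k=1$ degeneracy and on analyticity at $z=0$ are sensible refinements but do not change the argument.
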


\begin{proof}
The first term (\ref{eta1}) follows immediately from (\ref{gn z=0}). From
(\ref{gn eta}) we have%
\begin{equation}
\frac{z}{2}\gamma_{n}^{\prime}\left(  z\right)  =%
{\displaystyle\sum\limits_{k=1}^{\infty}}
k\eta_{n,k}z^{2k}, \label{left gn}%
\end{equation}
and%
\[
\gamma_{n-1}\left(  z\right)  -\gamma_{n+1}\left(  z\right)  =%
{\displaystyle\sum\limits_{k=1}^{\infty}}
\left(  \eta_{n-1,k}-\eta_{n+1,k}\right)  z^{2k}.
\]
Using the Cauchy product, we get
\begin{equation}
\left[  z^{2k}\right]  \left(  \gamma_{n-1}-\gamma_{n+1}+1\right)  \gamma
_{n}=\eta_{n,k}+%
{\displaystyle\sum\limits_{j=1}^{k-1}}
\left(  \eta_{n-1,j}-\eta_{n+1,j}\right)  \eta_{n,k-j}, \label{right gn}%
\end{equation}
where $\left[  z^{m}\right]  $ denotes the coefficient of $z^{m}$ in the given expression.

Using (\ref{left gn}) and (\ref{right gn}) in (\ref{D gn}), we obtain
\[
k\eta_{n,k}=\eta_{n,k}+%
{\displaystyle\sum\limits_{j=1}^{k-1}}
\left(  \eta_{n-1,j}-\eta_{n+1,j}\right)  \eta_{n,k-j},
\]
and (\ref{eta}) follows.
\end{proof}

\begin{remark}
If we introduce the forward and backward difference operators
\begin{equation}
\Delta f\left(  n\right)  =f\left(  n+1\right)  -f\left(  n\right)
,\quad\nabla f\left(  n\right)  =f\left(  n\right)  -f\left(  n-1\right)  ,
\label{diff}%
\end{equation}
then we can write (\ref{eta}) as
\[
\eta_{n,k}=-\frac{1}{k-1}%
{\displaystyle\sum\limits_{j=1}^{k-1}}
\left(  \Delta+\nabla\right)  \eta_{n,j}\eta_{n,k-j},\quad k\geq2,
\]
and it follows that (\ref{gn eta}) is an asymptotic series as $n\rightarrow
\infty.$ Using the same methods that we introduced in \cite{MR4136730}, we can
show that for $k\geq2$%
\[
\eta_{n,k}=O\left(  n^{-k-1}\right)  ,\quad n\rightarrow\infty.
\]

\end{remark}

Using (\ref{3-term L}) and (\ref{gn eta}), we see that%
\[
P_{n}\left(  x;z\right)  =x^{n}-\frac{n\left(  n-1\right)  }{2\left(
2n-1\right)  }x^{n-2}z^{2}+O\left(  z^{4}\right)  .
\]
We obtain higher order terms in the following theorem.

\begin{theorem}
The Maclaurin series of the polynomials $P_{n}\left(  x;z\right)  $ is%
\begin{equation}
P_{n}\left(  x;z\right)  =x^{n}+%
{\displaystyle\sum\limits_{k=1}^{\infty}}
\alpha_{n,k}\left(  x\right)  z^{2k}, \label{Pn series}%
\end{equation}
where
\[
\alpha_{0,k}\left(  x\right)  =\alpha_{1,k}\left(  x\right)  =0,\quad k\geq1,
\]
and the coefficients $\alpha_{n,k}\left(  x\right)  $ satisfy the recurrence%
\begin{equation}
\alpha_{n+1,k}\left(  x\right)  -x\alpha_{n,k}\left(  x\right)  +\eta
_{n,k}x^{n-1}+%
{\displaystyle\sum\limits_{j=1}^{k-1}}
\alpha_{n-1,j}\left(  x\right)  \eta_{n,k-j}=0. \label{alpha req}%
\end{equation}

In particular,%
\begin{equation}
\alpha_{n,1}\left(  x\right)  =-\frac{n\left(  n-1\right)  }{2\left(
2n-1\right)  }x^{n-2}, \label{alpha1}%
\end{equation}
and
\begin{equation}
\alpha_{n,2}\left(  x\right)  =n\left(  n-1\right)  \frac{8n\left(
n-1\right)  x^{2}+\left(  2n+1\right)  \left(  2n-1\right)  \left(
n-2\right)  \left(  n-3\right)  }{8\left(  2n+1\right)  \left(  2n-1\right)
^{2}\left(  2n-3\right)  }x^{n-4}. \label{alpha 2}%
\end{equation}

\end{theorem}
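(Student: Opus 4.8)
The plan is to feed the ansatz (\ref{Pn series}) directly into the three-term recurrence (\ref{3-term L}) and extract the coefficient of each power $z^{2k}$, using the Maclaurin expansion (\ref{gn eta}) of $\gamma_n$ already established. First I would write the three pieces $xP_n$, $P_{n+1}$, and $\gamma_n P_{n-1}$ as power series in $z^2$. The left-hand side $xP_n = x^{n+1} + \sum_{k\geq1} x\alpha_{n,k}(x)z^{2k}$ and the term $P_{n+1} = x^{n+1} + \sum_{k\geq1}\alpha_{n+1,k}(x)z^{2k}$ are immediate. For the product I would split $P_{n-1} = x^{n-1} + \sum_{j\geq1}\alpha_{n-1,j}(x)z^{2j}$, so that
\[
\gamma_n P_{n-1} = x^{n-1}\sum_{m\geq1}\eta_{n,m}z^{2m} + \left(\sum_{m\geq1}\eta_{n,m}z^{2m}\right)\left(\sum_{j\geq1}\alpha_{n-1,j}(x)z^{2j}\right),
\]
and apply the Cauchy product to the second summand. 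The coefficient of $z^{2k}$ in $\gamma_n P_{n-1}$ is then $\eta_{n,k}x^{n-1} + \sum_{j=1}^{k-1}\alpha_{n-1,j}(x)\eta_{n,k-j}$. Since (\ref{gn eta}) contains only even powers of $z$, the ansatz with $z^{2k}$ is consistent and the whole computation can be carried out as an identity of formal power series in $z$.

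Matching the coefficient of $z^{2k}$ on both sides of $xP_n = P_{n+1} + \gamma_n P_{n-1}$ yields
\[
x\alpha_{n,k}(x) = \alpha_{n+1,k}(x) + \eta_{n,k}x^{n-1} + \sum_{j=1}^{k-1}\alpha_{n-1,j}(x)\eta_{n,k-j},
\]
which is exactly (\ref{alpha req}) after transposition. The boundary data $\alpha_{0,k}=\alpha_{1,k}=0$ follow from $P_0=1=x^0$ and $P_1=x=x^1$, neither of which depends on $z$. Together with (\ref{alpha req}) these determine every $\alpha_{n,k}$: reading the relation for fixed $k$ as a first-order difference equation producing $\alpha_{n+1,k}$ from $\alpha_{n,k}$ and the already-known lower-order data $\alpha_{n-1,j}$ with $j<k$, one obtains all coefficients by induction on $n$ (and on $k$).

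For the closed forms I would specialize (\ref{alpha req}). At $k=1$ the convolution sum is empty, so $\alpha_{n+1,1}=x\alpha_{n,1}-\eta_{n,1}x^{n-1}$ with $\eta_{n,1}=\tfrac{n^2}{4n^2-1}$ from (\ref{eta1}); since $\alpha_{n,1}$ must be a multiple of $x^{n-2}$, the substitution $\alpha_{n,1}=a_nx^{n-2}$ collapses this to the scalar recurrence $a_{n+1}=a_n-\tfrac{n^2}{4n^2-1}$, which telescopes via the partial fraction $\tfrac{m^2}{4m^2-1}=\tfrac14+\tfrac18\bigl(\tfrac{1}{2m-1}-\tfrac{1}{2m+1}\bigr)$ and, with $a_1=0$, gives (\ref{alpha1}). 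At $k=2$ the recurrence reads $\alpha_{n+1,2}=x\alpha_{n,2}-\eta_{n,2}x^{n-1}-\eta_{n,1}\alpha_{n-1,1}$; here I would first compute $\eta_{n,2}=(\eta_{n-1,1}-\eta_{n+1,1})\eta_{n,1}=\tfrac{4n^3}{(4n^2-1)^2(4n^2-9)}$ from (\ref{eta}), then insert (\ref{alpha1}) for $\alpha_{n-1,1}$. Because the forcing now carries both an $x^{n-1}$ and an $x^{n-3}$ term, $\alpha_{n,2}$ has the shape $A_nx^{n-2}+B_nx^{n-4}$; matching the two powers of $x$ decouples the problem into two scalar first-order difference equations $A_{n+1}=A_n-\eta_{n,2}$ and $B_{n+1}=B_n+\eta_{n,1}\tfrac{(n-1)(n-2)}{2(2n-3)}$, whose solutions recombine into (\ref{alpha 2}).

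The derivation of the recurrence (\ref{alpha req}) itself is essentially bookkeeping and should go through cleanly. The real labor sits in the closed-form cases, and specifically in (\ref{alpha 2}): after substituting $\eta_{n,2}$ and (\ref{alpha1}), the two telescoping-type sums for $A_n$ and $B_n$ have right-hand sides that are rational functions of $n$ with denominators built from $(2n\pm1)$ and $(2n\pm3)$, and summing them by partial fractions and then simplifying back into the single compact fraction displayed in (\ref{alpha 2}) is where the algebra is heaviest. If the closed summation proves unwieldy, the safer route is to \emph{verify} rather than derive the stated formula, by substituting the claimed (\ref{alpha1}), (\ref{alpha 2}), together with $\eta_{n,1},\eta_{n,2}$, directly into the $k=2$ instance of (\ref{alpha req}) and checking that the coefficients of $x^{n-1}$ and $x^{n-3}$ vanish identically as rational functions of $n$.
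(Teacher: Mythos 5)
Your proposal is correct and follows essentially the same route as the paper: substitute the ansatz and the series for $\gamma_{n}$ into the three-term recurrence (\ref{3-term L}), take the Cauchy product, match coefficients of $z^{2k}$ to get (\ref{alpha req}), and then solve the resulting first-order difference equations in $n$ for $k=1,2$ with the trivial initial data. Your extra details (the partial-fraction telescoping for $k=1$ and the split $\alpha_{n,2}=A_{n}x^{n-2}+B_{n}x^{n-4}$) are just an explicit rendering of the summations the paper writes down.
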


\begin{proof}
Using (\ref{gn eta}) and (\ref{Pn series}) in (\ref{3-term L}), we have%
\begin{gather*}
x^{n+1}+%
{\displaystyle\sum\limits_{k=1}^{\infty}}
x\alpha_{n,k}\left(  x\right)  z^{2k}=x^{n+1}+%
{\displaystyle\sum\limits_{k=1}^{\infty}}
\alpha_{n+1,k}\left(  x\right)  z^{2k}\\
+x^{n-1}\gamma_{n}\left(  z\right)  +\gamma_{n}\left(  z\right)  \left(
{\displaystyle\sum\limits_{k=1}^{\infty}}
\alpha_{n-1,k}\left(  x\right)  z^{2k}\right)  ,
\end{gather*}
and therefore we obtain the recurrence
\[
x\alpha_{n,k}\left(  x\right)  =\alpha_{n+1,k}\left(  x\right)  +\eta
_{n,k}x^{n-1}+%
{\displaystyle\sum\limits_{j=1}^{k-1}}
\alpha_{n-1,j}\left(  x\right)  \eta_{n,k-j}.
\]

If $k=1,$ then (\ref{alpha req}) becomes%
\[
\alpha_{n+1,1}\left(  x\right)  -x\alpha_{n,1}\left(  x\right)  =-\eta
_{n,1}x^{n-1},
\]
and the solution with initial condition $\alpha_{0,k}\left(  x\right)  =0$ is%
\[
\alpha_{n,1}\left(  x\right)  =-x^{n-2}%
{\displaystyle\sum\limits_{i=0}^{n-1}}
\eta_{i,1}\left(  x\right)  =-\frac{n\left(  n-1\right)  }{2\left(
2n-1\right)  }x^{n-2}.
\]

Setting $k=2$ in (\ref{alpha req}) we get%
\[
\alpha_{n+1,2}\left(  x\right)  -x\alpha_{n,2}\left(  x\right)  =-\eta
_{n,1}\alpha_{n-1,1}\left(  x\right)  -\eta_{n,2}x^{n-1},
\]
and therefore%
\[
\alpha_{n,2}\left(  x\right)  =-x^{n-1}%
{\displaystyle\sum\limits_{i=0}^{n-1}}
\left[  x^{-i}\eta_{i,1}\alpha_{i-1,1}\left(  x\right)  +\frac{\eta_{i,2}}%
{x}\right]  .
\]
Using (\ref{eta}) and (\ref{alpha1}), we obtain (\ref{alpha 2}).
\end{proof}

\section{Conclusions}

We have defined the family of truncated Hermite polynomials $P_{n}\left(
x;z\right)  $, orthogonal with respect to the linear functional
\[
L\left[  p\right]  =%
{\displaystyle\int\limits_{-z}^{z}}
p\left(  x\right)  e^{-x^{2}}dx,\quad p\in\mathbb{R}\left[  x\right]  ,\quad
z>0.
\]
Such a linear functional satisfies the Pearson equation%
\[
L\left[  \phi\partial_{x}p\right]  =L\left[  2x\left(  \phi-1\right)
p\right]  ,\quad\phi\left(  x;z\right)  =x^{2}-z^{2}.
\]
We related $P_{n}\left(  x;z\right)  $ to the Hermite and Rys polynomials, and
studied the sequence $P_{n}\left(  x;z\right)  $ as semiclassical polynomials
of class $2$. The expansion of $\phi\partial_{x}P_{n}$ in the $\left\{
P_{k}\right\}  _{k\geq0}$ basis (structure relation), an asymptotic
approximation (for large $n)$, a lowering operator, second order ODE (in $x),$
and power series (in $z)$ for $P_{n}\left(  x;z\right)  $ are given.

We obtained a second order linear recurrence for the moments, as well as a
differential-recurrence equation in terms of the variable $z$. An asymptotic
approximation for the moments (as $z\rightarrow\infty)$ is obtained.
Differential equations (in $t$ and $z$, respectively) for the Stieltjes
function $S\left(  t;z\right)  $ of the moments associated with $L$ are deduced.

We also got nonlinear recurrences (Laguerre-Freud equations) and a nonlinear
ODE that the parameters $\gamma_{n}\left(  z\right)  $ satisfy. As a
consequence, an asymptotic approximation (for large $n)$, a
differential-recurrence equation, and a power series for the coefficients
$\gamma_{n}\left(  z\right)  $ in the recurrence relation of $P_{n}\left(
x;z\right)  $ are obtained.

We plan to continue our research on these polynomials in order to obtain
asymptotic expansions for $P_{n}\left(  x;z\right)  $ as $n\rightarrow\infty,$
$z\rightarrow\infty$ as well as when both $n,z\rightarrow\infty$
simultaneously. One should be able to obtain the well known asymptotic
approximations for the Hermite polynomials in the last case.

Finally, we will deal with the analysis of truncated Laguerre polynomials, as
well as other families of truncated semiclassical polynomials.

\bigskip

\textbf{Acknowledgement}

The work of the first author was supported by the strategic program
"Innovatives \"{O}-- 2010 plus" from the Upper Austrian Government, and by the
grant SFB F50 (F5009-N15) from the Austrian Science Foundation (FWF). We thank
Prof. Carsten Schneider for his generous sponsorship.

The work of the second author has been supported by FEDER/Ministerio de
Ciencia e Innovaci\^{o}n-Agencia Estatal de Investigaci\'{o}n of Spain, grant
PGC2018-096504-B-C33, and the Madrid Government (Comunidad de Madrid-Spain)
under the Multiannual Agreement with UC3M in the line of Excellence of
University Professors, grant EPUC3M23 in the context of the V PRICIT (Regional
Programme of Research and Technological Innovation).

\newif\ifabfull\abfullfalse\input apreambl

\end{document}